\theoremstyle{plain}
\newtheorem{thm}{Theorem}[section]
\newtheorem{lem}[thm]{Lemma}
\newtheorem{cor}[thm]{Corollary}
\theoremstyle{definition}
\newtheorem*{construction}{Construction}
\newtheorem{defn}{Definition} \setcounter{defn}{0}
\newtheorem{rmk}[thm]{Remark}
\newtheorem{rmks}[thm]{Remarks}
\numberwithin{thm}{section}
\newcommand{\Ess}{{\rm Ess}}
\newcommand{\Ext}{{\rm Ext}}
\newcommand{\Spec}{{\rm Spec \,}}
\newcommand{\Ker}{{\rm Ker}}
\newcommand{\A}{{\mathbb A}}
\newcommand{\G}{{\mathbb G}}
\newcommand{\N}{{\mathbb N}}
\renewcommand{\P}{{\mathbb P}}
\newcommand{\Q}{{\mathbb Q}}
\newcommand{\X}{{\mathbb X}}
\newcommand{\Y}{{\mathbb Y}}
\newcommand{\Rep}{\text{\sf Rep}}
\begin{document}
\title{The Homotopy Sequence of Nori's Fundamental Group}
\author{ Lei Zhang }
 \address{
Universit\"at Duisburg-Essen, FB6, Mathematik, 45117 Essen, Germany}
\email{lei.zhang@uni-due.de}
\thanks{This work was supported by the Sonderforschungsbereich/Transregio 45 "Periods, moduli
spaces and the arithmetic of algebraic varieties" of the DFG}
\date{April 15, 2012}

\begin{abstract} In this paper, we investigate the necessary sufficient conditions for
the exactness of the homotopy sequence of Nori's fundamental group
and apply these to various special situations to regain some
classical theorems and give a counter example to show the conditions
are not always satisfied. This work is partially based on the earlier
work of H.Esnault, P.H.Hai , E.Viehweg.
\end{abstract}

\maketitle
\section{Introduction}

If $f:X\to S$ is a separable proper morphism with geometrically
connected fibres between locally noetherian connected schemes, $x\to
X$ is a geometric point with image $s\to S$, Grothendieck shows in
\cite[Expos\'e X, Corollaire 1.4]{Gr} that one has a homotopy exact
sequence for the \'etale fundamental group:
$$\pi_1^{\text{\'et}}(\bar{X_s},x)\to\pi_1^{\text{\'et}}(X,x)\to\pi_1^{\text{\'et}}(S,s)\to1.$$
A similar case is that one can take $X,Y$ to be two locally
noetherian connected $k$-schemes with $k=\bar{k}$ and suppose $Y$ is
proper over $k$, so if $K$ is an algebraically closed field
containing $k$ and if we take a $K$-point $z=(x,y): \Spec(K)\to
X\times_kY$, then we get a canonical morphism of topological groups
$$\pi_1^{\text{\'et}}(X\times_kY,z)\to\pi_1^{\text{\'et}}(X,x)\times\pi_1^{\text{\'et}}(Y,y).$$
Again Grothendieck shows in \cite[Expos\'e X, Corollaire 1.7]{Gr}
that the canonical homomorphism is an isomorphism. This is called
the K\"unneth formula for the \'etale fundamental group. If
$X\times_kY$ admits a $k$-rational point then the K\"unneth formula
is a direct consequence of the homotopy exact sequence.

Let $X$ be a reduced connected locally noetherian scheme over a
field $k$, $x\in X(k)$ be a rational point. Let $N(X,x)$ be
the category whose objects consist of triples $(P,G,p)$ (where $P$
is an FPQC $G$-torsor over $X$, $G$ is a finite group scheme, $p\in
P(k)$ is a $k$-rational point lying over $x$), whose morphisms are
morphisms of $X$-schemes which intertwine the group actions and  preserve the points.
M.Nori proved in \cite[Part I, Chapter II, Proposition 2]{Nori} that
the projective limit $\varprojlim_{N(X,x)}G$ exists in the category
of $k$-group schemes (in the projective system we associate to each
index $(P,G,p)$ the group $G$). Then he defined the fundamental
group $\pi^N(X,x)$ to be the projective limit
$\varprojlim_{N(X,x)}G$. If $X$ is in addition proper over $k$ and if $k$ is
perfect, Nori gave in \cite[Part I, Chapter I]{Nori} a Tannakian
description of his fundamental group: he defined $\pi^N(X,x)$ to be
the Tannakian group of the neutral Tannakian category of $\Ess(X)$
(the essentially finite vector bundles on $X$) with the fibre
functor $x^*: V\mapsto V|_x$, and he showed that this definition is
the same as the one defined by the projective limit.

The main purpose of this article is to study the analogues of the
homotopy sequence and K\"unneth formula for Nori's fundamental
group. Since in Nori's fundamental group we only deal with rational
points, if the homotopy sequence is exact then K\"unneth formula
always automatically follows.

In \cite{EHV}[Section 2] H.Esnault, P.H.Hai , E.Viehweg, give a
counterexample which shows that homotopy sequence of Nori's
fundamental group is not always exact even for $X\to S$ projective
smooth and $S$ projective smooth as well. And then they give a
necessary and sufficient condition for the exactness of the homotopy
sequence of Nori's fundamental group under the assumption that $S$
is a proper $k$-scheme. But unfortunately there is a gap in the
argument for the necessary and sufficient condition. In this
article, our first goal is to reformulate some similar conditions to
make everything work. These works are contained in Theorem 2.1 and
Theorem 3.1, where we correct the mistake, improve the arguments and
make the wonderful ideas hidden in that article right and clean. The
upshot is that in Theorem 2.1 we don't have to assume $S$ to be
proper, so the result applies to the general definition of Nori's
fundamental group.

Then we make two applications of Theorem 2.1 and Theorem 3.1. We
first apply the criterion to show that the homotopy sequence for the
\'etale quotient of Nori's fundamental group is exact. The argument
is independent of Grothendieck's theory of the \'etale fundamental
group which was developed in \cite[Expos\'e X]{Gr}, so it can be
seen as a new proof of the homotopy exact sequence for the \'etale
fundamental group (in the language of Nori's fundamental group).

In \cite{MS}[Theorem 2.3] V.B.Mehta and S.Subramanian proved that
K\"unneth formula holds for Nori's fundamental group if both $X$ and
$Y$ are proper $k$-schemes. In \S 3, we apply Theorem 3.1 to give a
neat proof for the K\"unneth formula of the local quotient of Nori's
fundamental group. This can be thought of as a new proof of
\cite{MS}[Proposition 2.1] which is the key point for the proof of
\cite{MS}[Theorem 2.3].

In the end of this paper, we give a counterexample to show that
\cite{MS}[Theorem 2.3] does not work if $X$ or $Y$ is not proper.
We prove that if $X=\A_k^1$ and $Y=E$ is a supersingular elliptic
curve and $k$ is an algebraically closed field in positive characteristic, then the K\"unneth formula is always false. But, in contrast, the K\"unneth formula
always holds in this case for the \'etale fundamental group.  This also provides another counter example to show the failure of
the exactness of the homotopy
sequence for Nori's fundamental group.\\\\
{\bf Acknowledgments:}  I would like to express my deepest gratitude
to my advisor H\'el\`ene Esnault for proposing to me the questions
and guiding me through this topic with patience. I thank Vikram
Mehta for some enlightening discussions. I also thank Ph\`ung H\^{o}
Hai, Nguy\^e\~n Duy T\^an, Jilong Tong, Andre Chatzistamatiou, Kay
R\"ulling for a lot of helpful discussions and encouragements.

\maketitle
\section{The General Criterion}

\begin{defn}
Let $X$ be a reduced connected locally noetherian scheme over a  field $k$, $x\in X(k)$
be a rational point. We call a triple $(P,G,p)\in N(X,x)$ a
$G$-saturated torsor if the canonical map $\pi^N(X,x)\to G$ is
surjective.
\end{defn}


\begin{defn}Let $f:X\to S$ be a map of schemes, $\mathscr{F}$ be a sheaf of $O_X$-modules, $s:\Spec(\kappa(s))\hookrightarrow S$ a point, then we get a Cartesian diagram:
 $$\xymatrix{X_s\ar[r]^-{t}\ar[d]^-{g}&X\ar[d]^-f\\\Spec(\kappa(s))\ar[r]^-s&S}.$$We say $\mathscr{F}$
satisfies base change at $s$ if the canonical map
$$s^*f_*\mathscr{F}\to g_*t^*\mathscr{F}$$ is surjective. Note that
if $f$ is proper, $S$ is locally noetherian, $\mathscr{F}$ is
coherent and flat over $S$ then $\mathscr{F}$ satisfies base change
at $s$ if and only if the above canonical map is an isomorphism (see
\cite{Hart}{\rm[Chapter III, Theorem 12.11]}).
\end{defn}

\begin{defn}We call a morphism of schemes $f:X\to S$ separable if it is flat and if for $\forall s\in S$ the fibre $X_{{s}}$ is geometrically reduced over $\kappa(s)$.\cite{Gr}[Expos\'e X, D\'efinition 1.1]
\end{defn}

\begin{thm}\footnote{In the original article  \cite{EHV} the statement was that the exactness is equivalent to base change. But it seems that only base change is not enough to deduce the exactness, so we add extra conditions to make the argument. In addition, we removed the properness assumption on $S$ in the original statement.} {\rm (H.Esnault, P.H.Hai , E.Viehweg)} Let $f:X\to S$ be a separable proper
morphism with geometrically connected fibres between two reduced
 connected locally noetherian schemes over a perfect
field $k$. We suppose further that $S$ is irreducible. Let $x\in
X(k)$, $s\in S(k)$ and assume $f(x)=s$. Then the following
conditions are equivalent: \begin{enumerate}\item the sequence
$$\pi^N(X_s,x)\to\pi^N(X,x)\to\pi^N(S,s)\to1$$ is exact;\item for
any $G$-saturated torsor $(P,G,p)$ with structure map $\pi: P\to X$,
$\pi_*O_P$ satisfies base change at $s$  and the image of the
composition $\pi^N(X_s,x)\to \pi^N(X,x)\to G$ is a normal subgroup
of $G$; \item for any $G$-saturated torsor $(P,G,p)$ with structure
map $\pi: P\to X$, $\pi_*O_P$ satisfies base change at $s$ and there
is a $G'$-saturated torsor $\pi': P'\to S$ together with a morphism
$(P,G)\xrightarrow{\theta} (P',G')$ satisfy that the
$\theta$-induced map $(\pi'_*O_{P'})_s\to (f_*\pi_*O_P)_s$  is an
isomorphism.
\end{enumerate}
\end{thm}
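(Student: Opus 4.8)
The plan is to recast everything in Tannakian terms. Write $\sN(Y,y)$ for the neutral Tannakian category defining $\pi^{N}(Y,y)$, so $\sN(Y,y)\simeq\Rep\bigl(\pi^{N}(Y,y)\bigr)$, and let $f^{*}\colon\sN(S,s)\to\sN(X,x)$ and $t^{*}\colon\sN(X,x)\to\sN(X_{s},x)$ be the pullback functors. Two facts come for free. Since $f$ is proper with geometrically reduced and geometrically connected fibres and $S$ is reduced, $f_{*}\sO_{X}=\sO_{S}$, so $f^{*}$ is fully faithful with essential image stable under subobjects and $\pi^{N}(X,x)\to\pi^{N}(S,s)$ is faithfully flat; and since $X_{s}\to S$ factors through $\Spec\kappa(s)$, the composite $t^{*}f^{*}$ is pullback from a point, so $\pi^{N}(X_{s},x)\to\pi^{N}(X,x)\to\pi^{N}(S,s)$ is trivial. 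Let $N\subseteq\pi^{N}(X,x)$ be the scheme-theoretic image of $\pi^{N}(X_{s},x)$. Then (1) is equivalent to $N=\ker\bigl(\pi^{N}(X,x)\to\pi^{N}(S,s)\bigr)$ — after which $N$ is automatically normal — and, by Tannaka duality, to the statement that every $V\in\sN(X,x)$ with $t^{*}V$ trivial already lies in $f^{*}\sN(S,s)$, the reverse inclusion being automatic. Finally, for a $G$-saturated torsor $(P,G,p)$ with $H\subseteq G$ the image of $\pi^{N}(X_{s},x)\to G$, the fibre $X_{s}$ being proper and connected makes the $H$-reduction of $P_{s}$ connected with a $k$-point, so $H^{0}(P_{s},\sO_{P_{s}})=k[H\backslash G]$ is $[G:H]$-dimensional; since $\pi_{*}\sO_{P}$ is coherent and flat over $S$ and $f$ is proper, the cohomology-and-base-change theorem applies \emph{with no properness hypothesis on $S$}, so ``$\pi_{*}\sO_{P}$ satisfies base change at $s$'' implies $\dim_{k}s^{*}(f\pi)_{*}\sO_{P}=[G:H]$.

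I would then prove the cycle (1)$\Rightarrow$(2)$\Rightarrow$(3)$\Rightarrow$(1). For (1)$\Rightarrow$(2): exactness gives $N=\ker$, hence each $H$ is normal; and the monodromy $\pi^{N}(X,x)\to G\to G/H$ kills $\ker$, so it factors through $\pi^{N}(S,s)$ and yields a $(G/H)$-saturated torsor $\pi'\colon P'\to S$ with $X\times_{S}P'\cong P/H$. Using $f_{*}\sO_{X}=\sO_{S}$ and flat base change one identifies $(f\bar\pi)_{*}\sO_{P/H}$ (where $\bar\pi\colon P/H\to X$) with $\pi'_{*}\sO_{P'}$, hence locally free, and checks fibrewise that its inclusion into $(f\pi)_{*}\sO_{P}$ is an isomorphism; so $\pi_{*}\sO_{P}$ satisfies base change at $s$. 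For (2)$\Rightarrow$(3): normality of $H$ lets one form the quotient $(G/H)$-torsor $\bar P:=P/H\to X$; the crucial step — discussed below — is that the base-change hypothesis forces $\bar P$ to descend along $f$ to a $(G/H)$-saturated torsor $\pi'\colon P'\to S$. Taking $G':=G/H$ and $\theta$ the composite $P\to\bar P=X\times_{S}P'\to P'$, the required isomorphism $(\pi'_{*}\sO_{P'})_{s}\xrightarrow{\ \sim\ }(f_{*}\pi_{*}\sO_{P})_{s}$ then follows from $\pi'_{*}\sO_{P'}=(f\bar\pi)_{*}\sO_{\bar P}\hookrightarrow(f\pi)_{*}\sO_{P}$ together with the dimension count above and Nakayama.

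For (3)$\Rightarrow$(1) — the most formal implication — I would first observe that the homomorphism $q\colon G\to G'$ underlying $\theta$ is automatically surjective: as $\theta$ lies over $f$, the $G'$-torsor $X\times_{S}P'\to X$ is $P\times^{G}G'$, so its monodromy is $\pi^{N}(X,x)\to G\xrightarrow{q}G'$; but it also factors as $\pi^{N}(X,x)\twoheadrightarrow\pi^{N}(S,s)\twoheadrightarrow G'$, which is onto, forcing $q$ onto. Thus $X\times_{S}P'\cong P/\ker q$. Now the base-change hypothesis gives $\dim_{k}s^{*}(f\pi)_{*}\sO_{P}=[G:H]$, while the stalk isomorphism (tensored with $\kappa(s)$) gives $\dim_{k}s^{*}(f\pi)_{*}\sO_{P}=\dim_{k}s^{*}\pi'_{*}\sO_{P'}=|G'|=[G:\ker q]$; since $H\subseteq\ker q$ (the image of $\pi^{N}(X_{s},x)$ in $G'$ being trivial) this forces $H=\ker q$, so $H$ is normal and $P/H\cong X\times_{S}P'$ descends to $S$. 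Finally, take $V\in\sN(X,x)$ with $t^{*}V$ trivial, realised as $P\times^{G}\bar V$ for a representation $\bar V$ of a finite quotient $G$ coming from a $G$-saturated torsor $P$; then $H$ acts trivially on $\bar V$, so $\bar V$ is a representation of $G/H=G'$ and $V\cong(X\times_{S}P')\times^{G'}\bar V\cong f^{*}\bigl(P'\times^{G'}\bar V\bigr)\in f^{*}\sN(S,s)$. Letting $P$ vary over all $G$-saturated torsors gives $N=\ker\bigl(\pi^{N}(X,x)\to\pi^{N}(S,s)\bigr)$, i.e. (1).

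The genuine obstacle is the descent step inside (2)$\Rightarrow$(3): converting a base-change condition \emph{at the single point $s$} into an honest $(G/H)$-torsor over \emph{all of $S$}. One cannot invoke properness of $S$ (it is deliberately dropped), nor equip $\bar P=P/H$ with descent data along $X\to S$ for free; instead the descended torsor must be built from the coherent sheaf $(f\pi)_{*}\sO_{P}$, whose rank and base-change behaviour the hypothesis controls precisely, using Grauert-type semicontinuity and the irreducibility of $S$ to propagate the structure off $s$, and fpqc descent along the fpqc cover $f\colon X\to S$ to conclude. Carrying this out correctly — rather than asserting ``exactness $\Leftrightarrow$ base change'' as in \cite{EHV} — is the whole point of the corrected statement; the normality hypothesis in (2) is exactly what makes the candidate quotient torsor $P/H$ available at all.
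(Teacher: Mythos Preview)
Your strategy coincides with the paper's: the same cycle $(1)\Rightarrow(2)\Rightarrow(3)\Rightarrow(1)$, the same Tannakian dictionary (Lemma~2.2 in the paper plays the role of your ``maximal trivial subobject'' computations), and you correctly isolate $(2)\Rightarrow(3)$ as the only place where real geometry happens. Your $(3)\Rightarrow(1)$ via the criterion ``$t^{*}V$ trivial $\Rightarrow V\in f^{*}\sN(S,s)$'' is a clean repackaging of the paper's comparison $H=N=N'$.

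There is, however, one genuine inaccuracy in your sketch of $(2)\Rightarrow(3)$. You propose to build the descended torsor from $(f\pi)_{*}\sO_{P}=f_{*}V$ and to control it by Grauert semicontinuity. That does not work as stated: semicontinuity gives a \emph{lower} bound $\dim H^{0}(X_{t},V|_{X_{t}})\ge[G:H]$, but you have no matching upper bound for $V$ (the naive one is $|G|$). The paper's fix is to first apply hypothesis~(2) a second time, to the $(G/H)$-\emph{saturated} torsor $\bar P=P/H$, obtaining base change at $s$ for $W:=\bar\pi_{*}\sO_{P/H}$ as well. One then runs the semicontinuity argument with $W$, where the needed upper bound $\dim_{\kappa(t)}H^{0}(X_{t},W|_{X_{t}})\le\operatorname{rank}W=[G:H]$ is supplied by Lemma~2.2 (since $W|_{X_{t}}$ is essentially finite). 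Constancy follows, $f_{*}W$ is locally free, and $f^{*}f_{*}W\to W$ is checked to be an isomorphism; the torsor on $S$ is $P':=\Spec_{\sO_{S}}(f_{*}W)$ with the induced $G/H$-action. The isomorphism $(\pi'_{*}\sO_{P'})_{s}\cong(f_{*}V)_{s}$ in (3) is then read off from the commutative square comparing $f_{*}W$ and $f_{*}V$ at $s$. So the ``normality'' part of (2) is used exactly as you say (to form $P/H$), but the ``base change'' part of (2) must be invoked \emph{twice}, once for $P$ and once for $P/H$; your sketch only uses it once.
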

\begin{proof}"$(1)\Longrightarrow(2)$"
If the homotopy sequence is exact then clearly the image of
$\pi^N(X_s,x)\to \pi^N(X,x)\to G$ (which is denoted by $H$) is normal in $G$. The exactness
also gives us a commutative diagram
$$\xymatrix{\pi^N(X,x)\ar[r]\ar[d]&\pi^N(S,s)\ar[d]\\ G\ar[r]& G/H}.$$ This commutative diagram gives us a $G/H$-saturated torsor $(P',G/H,p')$ over $S$ and
a morphism in $N(X,x)$: $$\lambda:(P,G,p)\to
(P'\times_SX,G/H,p'\times_SX)\cong (P/H,G/H,p).$$ Let $W'$ be the
push forward of the structure sheaf of $P'$ to $S$, $V:=\pi_*O_P$,
$W:=f^*W'$. Let $\lambda^*: W\to V$ be the map induced by $\lambda$.
If we pull-back $\lambda^*$ to $X_s$ then we get a morphism in the
category of essentially finite vector bundles because $V|_{X_s}$
(resp.$W|_{X_s}$) is the 0-th direct image of the structure sheaf of
the torsor $P\times_XX_s$ (resp.$P'\times_SX_s$). From
\cite{Nori}[Part I, Chapter I, Proposition 2.9], this $\lambda^*$
corresponds, via Tannakian duality, to the morphism
$$k[G]^{\pi^N(X_s,x)}=k[G]^H=k[G/H]\to k[G]$$ in the category of
$\Rep_k(\pi^N(X_s,x))$. Hence $W|_{X_s}$ is the maximal trivial
subbundle of $V|_{{X_s}}$. But
$H^0(X_s,V|_{X_s})\otimes_kO_{X_s}\subseteq V|_{X_s}$ is the maximal
trivial subbundle (see lemma 2.2 below), thus the canonical map
$$W|_{X_s}=H^0(X_s,W|_{X_s})\otimes_kO_{X_s}\to H^0(X_s,V|_{X_s})\otimes_kO_{X_s}
$$ is an isomorphism. But note that the above map factors
$W|_{X_s}\to f^*f_*V|_{X_s}$. This implies $f^*f_*V|_{X_s}\to
H^0(X_s,V|_{X_s})\otimes_kO_{X_s}$ is an isomorphism, so base change
is satisfied.

"$(2)\Longrightarrow(3)$" Let $H\subseteq G$ be the image of the
composition $\pi^N(X_s,x)\to \pi^N(X,x)\to G$. Since it is normal we
get a $G/H$-torsor $P/H$ on $X$. If $W$ is the push-forward of the
structure sheaf of $P/H$ to $X$ and $V:=\pi_*O_P$, then we know from
our assumption that $W$ and $V$ satisfy base change at $s$. Let
$\lambda: W\hookrightarrow V$ be the imbedding induced from $P\to P/H$,
then we have the following commutative diagram of sheaves on
$X_s$:$$\xymatrix{
f^*f_*W|_{X_s}\ar[r]^-{a_1}\ar[d]^-{f^*f_*\lambda}&H^0(X_s,W|_{X_s})\otimes_kO_{X_s}\ar[d]^-{H^0(X_s,\lambda|_{X_s})}\ar[r]^-{a_2}&W|_{X_s}\ar[d]^-{\lambda|_{X_s}}\\f^*f_*V|_{X_s}\ar[r]^-{a_3}&H^0(X_s,V|_{X_s})\otimes_kO_{X_s}\ar[r]^-{a_4}&V|_{X_s}
}.$$ By base change $a_1, a_3$ are isomorphisms. Since
$\lambda|_{X_s}$ corresponds via Tannakian duality to
$k[G]^H\hookrightarrow k[G]$ (in the category
$\Rep_k(\pi^N(X_s,x))$), $W|_{X_s}$ is imbedded as the maximal
trivial subbundle of $V|_{X_s}$. Hence $a_2$ and
$H^0(X_s,\lambda|_{X_s})$ are isomorphisms. So $f^*f_*\lambda$ is
also an isomorphism. In particular
$$(f_*\lambda)_{s}:(f_*W)_{s}\to(f_*V)_{s}$$ is an isomorphism. Let
$r\in\N$ be the rank of $W$. For any point $t\in S$, by applying lemma 2.2 to $X_t\times_{\kappa(t)}\overline{\kappa(t)}/\overline{\kappa(t)}$ we know that the following map is an imbedding:
$$H^0(X_{t},W|_{X_t})\otimes_{\kappa(t)}O_{X_t}\to W|_{X_t}.$$ So $\dim_{\kappa(t)}(H^0(X_{t},W|_{X_t}))\leq r$.
But on the other hand, since $W$ satisfies base change at $s$,
$r=\dim_{k}(H^0(X_{s},W|_{X_s}))$ reaches the minimal dimension (the
dimension at the generic point), so by semi-continuity theorem we
have
 $$\dim_{\kappa(t)}(H^0(X_{t},W|_{X_t}))\geq \dim_{k}(H^0(X_{s},W|_{X_s}))=r.$$ This implies $H^0(X_{t},W|_{X_t})$ has constant
dimension $r$, and hence $W$ satisfies base change all over $S$. So $f_*W$ a vector bundle. Since $f^*f_*W\to W$ is injective after restricting to all the points of $X$, we have
it is an embedding as a subbundle (i.e. injective and locally split). But since $a_1,a_2$ are isomorphisms, we have $f^*f_*W\to W$ is an isomorphism. Now we can check easily that
$\Spec(f_*W)\to S$ with the canonical $G/H$-action induced from $P/H$ is an FPQC-torsor which satisfies all our conditions in (3).

"$(3)\Longrightarrow(1)$" Let $(P,G,p)$ be any $G$-saturated torsor over $X$, by the assumption $\pi_*O_P$ satisfies base change and there is a $G'$-torsor $P'$ over $S$ with a morphism $\theta: (P,G)\to (P',G')$ which satisfies the conditions in (3). Let $N$ be the image of $\Ker(\pi^N(f))$
in $G$ (where $\pi^N(f)$ is the map $\pi^N(X,x)\to\pi^N(S,s)$), $N'$
be the kernel of $G\to G'$, and $H\subseteq G$ be the image of the
composition $\pi^N(X_s,x)\to \pi^N(X,x)\to G$. We also write
$W:=\pi'_*O_{P'}$ and $V:=\pi_*O_{P}$. We first note that the
$\theta$-induced map $f^*W|_{X_s}\to V|_{X_s}$ corresponds to
$k[G/N']\to k[G]$ in $\Rep_k(\pi^N(X_s,x))$. But from base change of
$V$ and the fact that the $\theta$-induced map $W_{s}\to (f_*V)_{s}$ is
an isomorphism we know that $f^*W|_{X_s}\to V|_{X_s}$ should be the
same as $H^0(X_s,V|_{X_s})\otimes_kO_{X_s}\to V|_{X_s}$ as
subobjects. Thus the canonical imbedding $k[G/N']\hookrightarrow
k[G/H]$ should be an isomorphism. Hence $N'=H$ as subgroups. But
since we have $H\subseteq N\subseteq N'$, so $H=N$ as well. Because
the equality holds for all $G$-saturated torsor $(P,G,p)$, we have
$\pi^N(X_s,x)\to\Ker(\pi^N(f))$ is surjective. This completes the
proof.
\end{proof}

\begin{lem}If $X$ is a reduced
connected proper scheme over a perfect field $k$ with a point
$x\in X(k)$, then for any essentially finite vector bundle
$V$ on $X$ the canonical morphism $\Gamma(X,V)\otimes_{k}
O_X\rightarrow V$ imbeds $\Gamma(X,V)\otimes_k O_X$ as the maximal
trivial subbundle of $V$.
\end{lem}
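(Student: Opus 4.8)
The plan is to transport the statement through Nori's Tannakian equivalence $\Ess(X)\xrightarrow{\sim}\Rep_k(\pi^N(X,x))$, under which $O_X$ corresponds to the unit representation $\mathbf 1$ and, since $\Ess(X)$ is by construction a full subcategory of the category of $O_X$-modules, $\Hom_{\Ess(X)}(O_X,W)=\Gamma(X,W)$ for every $W\in\Ess(X)$. I will also use that $\Gamma(X,O_X)=k$: the rational point $x$ forces $X$ to be geometrically connected (the connected component of $X\times_k\bar k$ containing the point lying over $x$ is $\Gal(\bar k/k)$-stable, hence descends to a clopen piece of the connected scheme $X$, hence is all of $X$), and since $X$ is reduced and $k$ perfect, $X\times_k\bar k$ is moreover reduced, so $\Gamma(X,O_X)\otimes_k\bar k=\Gamma(X\times_k\bar k,O)=\bar k$, whence $\Gamma(X,O_X)=k$.

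First I would note that $\Gamma(X,V)$ is finite-dimensional over $k$ (properness), so the source of the canonical evaluation map $\phi\colon\Gamma(X,V)\otimes_kO_X\to V$ is a finite direct sum of copies of $O_X$ and lies in $\Ess(X)$. Let $\mathcal{I}\subseteq V$ be the image of $\phi$ formed in the abelian category $\Ess(X)$; recalling that the inclusion $\Ess(X)\hookrightarrow\mathrm{Coh}(X)$ is exact, $\mathcal{I}$ equals the sheaf-theoretic image of $\phi$, is locally free, and sits inside $V$ with locally free cokernel, i.e. is a subbundle. Being the image of a morphism out of a trivial object, $\mathcal{I}$ corresponds under the equivalence to a quotient of a trivial $\pi^N(X,x)$-representation, hence to a trivial representation, so $\mathcal{I}\cong O_X^{\oplus m}$ for some $m\geq 0$.

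Next I would determine $m$ and prove $\phi$ is injective. Applying $\Gamma(X,-)$ to the factorization $\Gamma(X,V)\otimes_kO_X\twoheadrightarrow\mathcal{I}\hookrightarrow V$, and using $\Gamma(X,\Gamma(X,V)\otimes_kO_X)=\Gamma(X,V)\otimes_k\Gamma(X,O_X)=\Gamma(X,V)$ together with the identity $\Gamma(X,\phi)=\id_{\Gamma(X,V)}$, one sees that $\Gamma(X,\mathcal{I})\hookrightarrow\Gamma(X,V)$ is surjective, hence an isomorphism. Therefore $m=\dim_k\Gamma(X,\mathcal{I})=\dim_k\Gamma(X,V)$, so $\phi\colon\Gamma(X,V)\otimes_kO_X\to\mathcal{I}$ is a surjection of vector bundles of the same rank $m$; its kernel is locally free of rank $0$, hence zero, so $\phi$ is an isomorphism onto the trivial subbundle $\mathcal{I}$. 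For maximality, any trivial subbundle $j\colon O_X^{\oplus n}\hookrightarrow V$ is determined by the sections $j(e_1),\dots,j(e_n)\in\Gamma(X,V)=\Gamma(X,\mathcal{I})$, all of which lie in $\mathcal{I}$, so $j$ factors through $\mathcal{I}\hookrightarrow V$; hence $\mathcal{I}$ contains every trivial subbundle of $V$.

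The one genuinely non-formal point — the main obstacle — is identifying $\mathcal{I}$ correctly: that the naive image of $\phi$ is a subbundle and, above all, that it is a \emph{trivial} bundle. A globally generated subbundle of an arbitrary vector bundle need be neither (e.g. $O(1)\subseteq O(1)\oplus O(-1)$ on $\P^1$), so this is exactly where the Tannakian (abelian, representation-theoretic) structure of $\Ess(X)$ is indispensable rather than merely the bundle-theoretic structure; once $\mathcal{I}$ is known to be trivial, the rest is bookkeeping with the evaluation map and a rank count.
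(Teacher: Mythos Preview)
Your argument is correct. Both your proof and the paper's rest on the Tannakian equivalence $\Ess(X)\simeq\Rep_k(\pi^N(X,x))$, but the paper takes a shorter path: it simply applies the fibre functor $\omega_x$ to the evaluation map $\Gamma(X,V)\otimes_kO_X\to V$ and identifies the result with the canonical inclusion $\Hom_{\pi^N(X,x)}(k,\omega_x(V))\hookrightarrow\omega_x(V)$, which is manifestly the maximal trivial subrepresentation; Tannakian duality then transports this back to $\Ess(X)$ in one stroke. You instead stay largely on the bundle side, forming the image $\mathcal I$ in $\Ess(X)$, invoking the equivalence only to see that $\mathcal I$ is trivial, and then running a rank count via $\Gamma(X,-)$ to conclude injectivity and maximality. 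What the paper's route buys is brevity and conceptual clarity (the whole statement becomes the tautology ``invariants are the maximal trivial sub''); what yours buys is an explicit verification of the bundle-theoretic content (that the image is a genuine subbundle, that $\phi$ is injective on the nose), which the paper leaves implicit in the phrase ``by Tannakian duality.'' Your closing remark correctly isolates the one non-formal point: triviality of $\mathcal I$ genuinely needs the Tannakian structure and would fail for arbitrary bundles.
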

\begin{proof}Let $\text{\rm Ess}(X)$ be the category of essentially finite vector bunldes, $\omega_x: \text{\rm Ess}(X)\rightarrow
\text{Vec}_k$ be the fibre functor. Then applying $\omega_x$ to the
canonical morphism $\Gamma(X,V)\otimes_k O_X\rightarrow V$ we get
$\text{Hom}_{O_X}(O_X, V)\cong\Gamma(X,V)\rightarrow
V_x\otimes_{O_{X,x}}k=\omega_x(V)$. But note that we have
$\text{Hom}_{O_X}(O_X,
V)\cong\text{Hom}_{\pi^{N}(X,x)}(k,\omega_x(V))$ where $k$ stands
for the dim 1 vector space with trivial $\pi^{N}(X,x)$ action. One
checks readily that under these isomorphisms we get exactly the
canonical injection
$\text{Hom}_{\pi^{N}(X,x)}(k,\omega_x(V))\rightarrow \omega_x(V)$
sending any morphism $k\rightarrow \omega_x(V)$ to the image of
$1\in k$. Since this map imbeds
$\text{Hom}_{\pi^{N}(X,x)}(k,\omega_x(V))$ as the maximal trivial
sub of $\omega_x(V)$. Using Tannakian duality we get our result.
\end{proof}

\subsection{Application to the \'etale quotient}
\begin{defn}Let $X$ be a connected reduced locally noetherian scheme over a perfect field $k$ which admits a rational point $x\in X(k)$. Let $N^{\text{\'et}}(X,x)$
be the full subcategory of $N(X,x)$ whose objects consist of those
$(P,G,p)$ with $G$ finite \'etale. This sub category is filtered so
we can define the \'etale quotient of $\pi^N(X,x)$ to be
$\pi^{\text{\'et}}(X,x):=\varprojlim_{N^{\text{\'et}}(X,x)}G$. We
have an obvious surjection:
$\pi^N(X,x)\twoheadrightarrow\pi^{\text{\'et}}(X,x)$.\end{defn}

\begin{lem}Let $X$ be a geometrically connected reduced locally noetherian scheme over a perfect field $k$ which admits a rational point $x\in X(k)$. Let $(P,G,p)$ be an \'etale torsor over $(X,x)$. This torsor is
$G$-saturated if and only if $P$ is geometrically connected.
\end{lem}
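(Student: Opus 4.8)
The plan is to relate the $G$-saturatedness of the étale torsor $(P,G,p)$ to the classical picture of the étale fundamental group. Recall that for an étale torsor, being $G$-saturated means the canonical map $\pi^N(X,x)\to G$ is surjective, and since $G$ is finite étale this factors through $\pi^{\text{\'et}}(X,x)$, so $(P,G,p)$ is $G$-saturated iff $\pi^{\text{\'et}}(X,x)\to G$ is surjective. On the other hand, by the theory recalled in the introduction (or directly from Nori's comparison of his group with the étale one in the étale-torsor regime), $\pi^{\text{\'et}}(X,x)$ agrees with Grothendieck's $\pi_1^{\text{\'et}}(X,x)$, and a connected finite étale cover corresponds to a transitive action, i.e. a surjection onto the structure group. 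So the first step is: reduce the statement to showing that $P$ is connected (as a scheme over $k$) iff $\pi^{\text{\'et}}(X,x)\to G$ is surjective, and then observe that for $G$ finite étale, $P$ connected over $\bar k$ is equivalent to $P$ connected over $k$ together with $P$ having a $k$-point — but here we must be careful, since $P$ is only required to be geometrically connected in the statement, not merely connected.

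Next I would argue both directions carefully. For the ``only if'' direction: if $(P,G,p)$ is $G$-saturated, then $\pi^{\text{\'et}}(X,x)\to G$ is surjective; base-changing to $\bar k$, and using that $X$ is geometrically connected so that $\pi^{\text{\'et}}(X_{\bar k},x)\to\pi^{\text{\'et}}(X,x)$ has image a normal (in fact the whole geometric) subgroup whose quotient is $\Gal(\bar k/k)$ acting appropriately — the key point is that $X$ geometrically connected forces the composite $\pi^{\text{\'et}}(X_{\bar k},x)\to\pi^{\text{\'et}}(X,x)\to G$ to still be surjective, because $G$ is a \emph{constant} finite group (it is a finite group scheme over $k$; if we want to be safe we should note étale over perfect $k$ need not be constant, so we should replace $G$ by its base change and track the Galois action). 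Granting surjectivity of $\pi^{\text{\'et}}(X_{\bar k},x)\to G_{\bar k}$, the cover $P_{\bar k}\to X_{\bar k}$ is connected, hence $P$ is geometrically connected. For the ``if'' direction: if $P$ is geometrically connected, then $P_{\bar k}\to X_{\bar k}$ is a connected $G_{\bar k}$-torsor with a $\bar k$-point over $x$, hence corresponds to a surjection $\pi^{\text{\'et}}(X_{\bar k},x)\twoheadrightarrow G_{\bar k}$; this forces $\pi^{\text{\'et}}(X,x)\to G$ to be surjective as well (its image, pulled back to $\bar k$, contains all of $G_{\bar k}$).

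The translation between ``$P$ connected étale cover of $X$ with $k$-point over $x$'' and ``transitive monodromy action'' is the routine half; the main obstacle I anticipate is handling the possible non-constancy of the finite étale group scheme $G$ over the perfect (not necessarily algebraically closed) field $k$, and more importantly pinning down precisely why geometric connectedness of $X$ lets us pass surjectivity of $\pi^{\text{\'et}}(X,x)\to G$ back and forth with surjectivity after base change to $\bar k$. The clean way to do this is to use the exact sequence $1\to\pi^{\text{\'et}}(X_{\bar k},x)\to\pi^{\text{\'et}}(X,x)\to\Gal(\bar k/k)\to 1$ (valid since $X$ is geometrically connected and proper/nice enough, or simply invoke it for the étale fundamental group), together with the fact that a finite étale $k$-group scheme $G$ is determined by the finite set $G(\bar k)$ with its Galois action and that $G$-torsors over $X$ with a $k$-point correspond to continuous $\pi^{\text{\'et}}(X,x)$-equivariant... — but since we only need the surjectivity statement and not a full equivalence of categories, I would keep this lightweight: show directly that $\pi^{\text{\'et}}(X,x)\to G$ is surjective iff $\pi^{\text{\'et}}(X_{\bar k},x)\to G(\bar k)$ has image generating $G$, and identify the latter with connectedness of $P_{\bar k}$ via the standard dictionary between $\pi_1^{\text{\'et}}$ and the category of finite étale covers. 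A secondary subtlety to mention is that one should use a \emph{geometric} point over $x$ consistently, and that the $k$-rational point $p\in P(k)$ over $x$ guarantees the relevant base points match up so that ``connected'' for the torsor really does mean ``transitive monodromy'' rather than merely ``a single Galois orbit of components.''
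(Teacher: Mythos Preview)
Your outline is correct but takes a genuinely different route from the paper. You lean on the SGA1 machinery --- the identification of Nori's $\pi^{\text{\'et}}(X,x)$ with Grothendieck's profinite fundamental group, the sequence $1\to\pi_1^{\text{\'et}}(X_{\bar k})\to\pi_1^{\text{\'et}}(X)\to\Gal(\bar k/k)\to1$, and the dictionary between connected finite \'etale covers and transitive monodromy --- to pass between $k$ and $\bar k$ and between surjectivity and connectedness. The paper instead reduces to $k=\bar k$ at the very first line, using Nori's compatibility of $\pi^N$ with separable base change, and then gives two short direct arguments with no fundamental-group theory at all: for ``saturated $\Rightarrow$ connected'' it takes the connected component $Q\subseteq P$ containing $p$, checks via the torsor diagram that the stabilizer $H\subseteq G$ of $Q$ makes $Q\to X$ an $H$-torsor, and concludes $H=G$ from saturation; for ``connected $\Rightarrow$ saturated'' it observes that any morphism $(P',G',p')\to(P,G,p)$ in $N(X,x)$ has $P'\to P$ finite flat with open-and-closed image, hence surjective since $P$ is connected, so pulling back over $x$ shows $G'\to G$ is surjective.

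What your approach buys is a conceptual tie to the classical picture; what it costs is exactly the juggling you flag --- Nori's pro-\'etale $k$-group scheme versus Grothendieck's profinite group, and the possible non-constancy of $G$ over $k$. The paper's argument sidesteps all of that by base-changing first, and uses nothing from SGA1 beyond the trivial fact that a finite flat map has open-and-closed image. Note also that one of the paper's explicit aims is to keep the \'etale application \emph{independent} of Grothendieck's Expos\'e~X theory, so importing that machinery here would undercut the point; if you want to streamline your own version, adopting the ``reduce to $k=\bar k$ first'' step would dissolve most of the subtleties you list.
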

\begin{proof}Since the formation of Nori's fundamental group is
compatible with separable field extensions, we can assume $k=\bar{k}$. \cite{Nori}[Part I, Chapter II, Proposition 5]

"$\Longrightarrow$" Let's take $Q\subseteq P$ to be the connected component of $P$ containing $p$. Now $G$ is an abstract group we can
write the action $\rho:P\times_kG\to P$ as $\coprod_{G} P\to P$ where each component in the direct union is mapped to $P$ via a unique element in $G$. Since
$P$ is an $G$-torsor we have the following cartesian diagram: $$\xymatrix{\coprod_{G} P\ar[r]^-{\rho}\ar[d]^{id^G}&P\ar[d]\\P\ar[r]&X}.$$
If we let $H\subseteq G$ be the maximal subgroup of $G$ which fix $Q$, then we can see by definition that $Q\times_kH\subseteq P\times_kG$ is the intersection of
$\rho^{-1}(Q)$ and ${(id^G)^{-1}(Q)}$. Thus the square $$\xymatrix{\coprod_{H} Q\ar[r]^-{\rho}\ar[d]^{id^H}&Q\ar[d]\\Q\ar[r]&X}$$ is cartesian. Hence $Q$ is
an $H$-torsor. But from the assumption the imbedding $H\to  G$ should be surjective. This tells us $H=G$. But then the map of $G$-torsors $Q\subseteq P$ should also be
an isomorphism. So $P$ is connected.

"$\Longleftarrow$" Let $(P',G',p')\to (P,G,p)$ be any morphism in
$N(X,x)$. Since $P\to X$ is \'etale, we know $P'\to P$ is finite
flat. Thus the image must be both open and closed, and hence it must
be the whole of $P$. But if we pull-back the surjective map $P'\to
P$ via $x\in X(k)$, we will get the group homorphism $G'\to G$. Thus
this homorphism must be surjective. Since $(P',G',p')$ is taken
arbitrarily, it actually shows that $(P,G,p)$ is $G$-saturated.
\end{proof}

\begin{cor}Let $f:X\to S$ be a separable proper
morphism with geometrically connected fibres between two reduced
geometrically connected locally noetherian schemes over a perfect
field $k$. Let $x\in X(k)$, $s\in S(k)$ and assume $f(x)=s$. Then
the homotopy sequence:$$\pi^{\text{\'et}}(X_s,x)\to\pi^{\text{\'et}}(X,x)\to\pi^{\text{\'et}}(S,s)\to1$$ is exact.
\end{cor}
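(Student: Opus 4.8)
The natural plan is to run the proof of Theorem 2.1 inside the subcategory $N^{\text{\'et}}$. Since $\pi^{\text{\'et}}$ is formed from the objects of $N^{\text{\'et}}$ alone, that proof applies word for word once ``$G$-saturated torsor'' is everywhere read as ``$G$-saturated \'etale torsor'' and $\pi^N$ as $\pi^{\text{\'et}}$; one only has to check that the auxiliary torsors appearing there stay \'etale. They do: for $H\trianglelefteq G$ the cover $P/H\to X$ is finite \'etale, being $P\to X$ composed with the free quotient $P\to P/H$ by the finite group $H$; and the torsor $\Spec(f_*W)\to S$ built in the step ``$(2)\Rightarrow(3)$'' is \'etale because $f$ is faithfully flat (it is flat, and proper with non-empty fibres, hence surjective) and its pullback along $f$ is the \'etale cover $P/H\to X$, so \'etaleness descends. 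Thus the Corollary reduces to: (i) the two outer arrows, and (ii) the verification, for every $G$-saturated \'etale torsor $(P,G,p)$ over $X$, that $\pi_*O_P$ satisfies base change at $s$ and that $H:=\im(\pi^{\text{\'et}}(X_s,x)\to G)$ is normal in $G$. We may assume $k=\bar k$ throughout (Nori's fundamental group, hence its \'etale quotient, is unchanged under separable extensions of the perfect base field), so that $G$ is a finite abstract group and, by Lemma 2.3, a $G$-saturated \'etale torsor is a connected $G$-Galois cover carrying a $k$-point over $x$; we may also reduce to the case $S$ irreducible.

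For (i): the composite $\pi^{\text{\'et}}(X_s,x)\to\pi^{\text{\'et}}(X,x)\to\pi^{\text{\'et}}(S,s)$ is trivial because $X_s\to S$ factors through the point $s$; and $\pi^{\text{\'et}}(X,x)\to\pi^{\text{\'et}}(S,s)$ is onto because, for a connected finite \'etale $(S',s')\to(S,s)$, the cover $X\times_SS'\to X$ is connected --- $X\times_SS'\to S'$ being proper and surjective with geometrically connected fibres over the connected $S'$, so that a splitting into two non-empty closed pieces would produce closed images covering $S'$ and meeting in a point whose connected fibre would meet both pieces --- and it carries the point $(x,s')$.

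Now fix a $G$-saturated \'etale torsor $(P,G,p)$; $P$ is geometrically connected by Lemma 2.3. Set $N:=\im\bigl(\Ker(\pi^{\text{\'et}}(X,x)\to\pi^{\text{\'et}}(S,s))\to G\bigr)$, a normal subgroup of $G$ which contains $H$ by the triviality just noted. The cover $P/N\to X$ descends: the surjection $\pi^{\text{\'et}}(X,x)\to G/N$ annihilates $\Ker(\pi^{\text{\'et}}(X,x)\to\pi^{\text{\'et}}(S,s))$, hence factors through $\pi^{\text{\'et}}(S,s)$, so $P/N\cong f^*P_1$ for a $G/N$-torsor $P_1\to S$ with a $k$-point over $s$, and $P_1$ is geometrically connected (as $f^*P_1=P/N$ is and the fibres of $f$ are geometrically connected), hence $G/N$-saturated. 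Consequently $\pi_*O_{P/N}$ is pulled back from $S$, and by the projection formula together with $f_*O_X=O_S$ its direct image on $S$ is the pushforward of $O_{P_1}$; in particular $\pi_*O_{P/N}$ satisfies base change all over $S$. Next, by Nori's duality $\pi_*O_P$ corresponds to $k[G]$ with its regular action of $\pi^{\text{\'et}}(X)$ through $G$, the largest subobject of $\pi_*O_P$ pulled back from $S$ corresponds to $k[G]^N$, and by Lemma 2.2 the maximal trivial subbundle of $\pi_*O_P|_{X_s}$ corresponds to $k[G]^H$; a Tannakian computation shows that base change for $\pi_*O_P$ at $s$ already forces $k[G]^N=k[G]^H$, i.e.\ (as $H\subseteq N$) $H=N$, hence $H$ normal. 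Conversely, granting $H=N$ one recovers base change at $s$ for $\pi_*O_P$ from the descent of $P/N$ and the fact that then $\pi_*O_{P/N}|_{X_s}$, trivial of rank $[G:N]=[G:H]$, is the maximal trivial subbundle of $\pi_*O_P|_{X_s}$. So for a $G$-saturated \'etale torsor both conditions in (ii) come down to the single equality $H=N$, which --- since $P|_{X_t}$ surjects onto $(P/N)|_{X_t}$ and the latter acquires $[G:N]$ connected components over $\overline{\kappa(t)}$ --- is in turn equivalent to: every geometric fibre of $f\pi\colon P\to S$ has exactly $[G:N]$ connected components.

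The main obstacle is this last statement, i.e.\ the constancy of the number of geometric connected components of the fibres of $f\pi\colon P\to S$. I would attack it through the Stein factorization $P\xrightarrow{h}S''\xrightarrow{g}S$ of the proper morphism $f\pi$, with $S''=\Spec_S\bigl((f\pi)_*O_P\bigr)$: once $g$ is known to be finite \'etale, then, $f\pi$ being separable (it is flat with geometrically reduced fibres, being $\pi$ composed with the separable $f$), the fibres of $h$ are geometrically connected and geometrically reduced, so over the $k$-point $s$ the algebra $(f\pi)_*O_P\otimes k$ is \'etale and maps isomorphically to $H^0(P_s,O_{P_s})$ --- which is base change at $s$ --- and the previous paragraph then gives $\deg g=[G:N]$, finishing the proof. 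The work is to prove $g$ finite \'etale; what one has for free is that $(f\pi)_*O_P$ is torsion-free over the reduced $S$ (since $O_P$ is $S$-flat) and that $g$ is \'etale over the generic point of $S$ (there $(f\pi)_*O_P\otimes\kappa(\eta)=H^0(P_\eta,O_{P_\eta})$ is an \'etale $\kappa(\eta)$-algebra, $P_\eta$ being proper and geometrically reduced over $\kappa(\eta)$). Upgrading this to finite-\'etaleness everywhere is the delicate point: one can, for instance, reduce to $\dim S=1$ with $S$ regular, where torsion-freeness makes $(f\pi)_*O_P$ locally free and hence $g$ finite flat, and then use semicontinuity together with the separability of $f\pi$ to see that the fibres of $g$ remain \'etale. (Alternatively one bypasses the Stein factorization and obtains base change at $s$ from the semicontinuity argument in the proof of ``$(2)\Rightarrow(3)$'' of Theorem 2.1, applied to $\pi_*O_{P/N}$ and $\pi_*O_P$.) This is the one step where the content of Grothendieck's homotopy exact sequence genuinely reappears, now drawn out of properness and the coherent base-change formalism rather than from the theory of Galois categories.
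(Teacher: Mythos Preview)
Your overall strategy matches the paper's: both hinge on the Stein factorization $P\xrightarrow{\phi}Q\xrightarrow{\varpi}S$ of the proper map $f\circ\pi\colon P\to S$. The step you correctly flag as ``the delicate point'' --- that $\varpi$ is finite \'etale --- is exactly where your argument stays incomplete. The paper resolves it in one line: since $f\circ\pi$ is proper and separable (flat with geometrically reduced fibres, as both $\pi$ and $f$ are), \cite[7.8.10(i)]{EGA} says the finite part of its Stein factorization is \'etale. That reference is the missing ingredient.

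Your proposed workarounds do not close the gap. The reduction to $\dim S=1$ regular is unjustified: \'etaleness of a finite morphism cannot be tested by restricting to curves through a point. The alternative via the semicontinuity argument of $(2)\Rightarrow(3)$ in Theorem~2.1 also falls short, because that argument \emph{starts} from base change at $s$ and propagates it, whereas here base change at $s$ is what you are trying to prove. Concretely, you know $H_t:=\im(\pi^{\text{\'et}}(X_{\bar t})\to G)\subseteq N$ for every $t$, giving $h^0(X_{\bar t},V|_{X_{\bar t}})=[G:H_t]\geq[G:N]$, while upper semicontinuity only gives $[G:H_t]\geq[G:H_\eta]$; neither inequality forces $H_s=N$.

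Once $\varpi$ is \'etale, the paper takes a more direct route than your Tannakian reduction to $H=N$. Base change at $s$ follows because $\phi_s\colon P_s\to Q_s$ still has geometrically connected fibres \cite[7.8.6]{EGA}, so $(f_*V)_s=(\varpi_*O_Q)_s\cong H^0(Q_s,O_{Q_s})\cong H^0(P_s,O_{P_s})$. Then, instead of invoking $N$, the paper builds a $G'$-torsor structure on $Q$ directly: the $G$-action on $P$ descends to $Q$; the stabiliser $H\subseteq G$ of $q:=\phi(p)\in Q_s$ acts trivially on the connected \'etale cover $Q\to S$ (an automorphism fixing a point is the identity), hence $H$ is normal and $G':=G/H$ acts; and a degree count shows $Q\times_kG'\to Q\times_SQ$ is an isomorphism. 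This verifies condition~(3) of Theorem~2.1 and finishes via $(3)\Rightarrow(1)$, bypassing both your detour through $N$ and the need to check that the auxiliary torsor $P/H$ stays \'etale.
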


\begin{proof}
Without loss of generality one may assume $k=\bar{k}$
\cite{Nori}[Part I, Chapter II, Proposition 5]. Now let $(P,G,p)$ be
a $G$-saturated \'etale torsor over $X$, $\pi: P\to X$ be the
structure map $V:=\pi_*O_P$. Let $P\xrightarrow{\phi}
Q\xrightarrow{\varpi} S$ be the stein factorization of the proper
map $P\xrightarrow{\pi}X\xrightarrow{f}S$. Since $f\circ\pi$ is
proper separable $\varpi$ is finite \'etale \cite{EGA}[7.8.10 (i)]. Thus $\phi$ is proper
separable surjective with geometrically connected fibres. But then
the pull back $\phi_s: P_s\to Q_s$ along the rational point
$s\hookrightarrow S$ is also proper separable surjective with
geometrically connected fibres. Hence $O_{Q_s}\to (\phi_s)_*O_{P_s}$
is an isomorphism \cite{EGA}[7.8.6]. But since $\varpi:Q\to S$ is affine,
$H^0(Q_s,O_{Q_s})\cong \varpi_*O_{Q}|_s\cong
f_*\pi_*O_P|_s=f_*V|_s$. Thus base change is satisfied for $V$ at
$s$.

The action $P\times_kG\to P$ induces a map $V\to V\otimes_kk[G]$.
Push it to $S$ we get $f_*V\to f_*V\otimes_kk[G]$. Thus there is an
action of $G$ on $Q=\Spec_{O_S}(f_*V)$ which makes $\phi: P\to Q$ $G$-equivariant. If we pull
back the map $P\to Q\times_SX$ along the rational point $x\in X(k)$,
we get a $G$-equivariant map $t:G\to G'$, where we identify $G$ with
$P\times_Xx$ via the rational point $p\in P$ and
$G':=Q\times_SX\times_Xx=Q_s$ is a $G$-set with a distinguished
point $q:=t(e)$. Since $t$ is $G$-equivariant, the subgroup $H:=t^{-1}(q)$ of $G$ is the
stabilizer of $q$. Now let $h\in H$ be
an element. Consider the $S$-isomorphism $Q\to Q$ induced by $h$.
Evidently $h$ sends $q$ to $q$, and since $Q$ is a connected finite
\'etale cover of $S$, the $S$-isomorphism induced by $h$ must be the
identity\cite{Gr}[Expos\'e I, Corollaire 5.4.]. Hence $H$ acts trivially on $Q$ and in particular it also
acts trivially on $G'$.  But since $t:G\to G'$
is surjective, $G'$ is the quotient of
$G$ by $H$, and $t$ is the quotient map. And also for any $x\in G$, we have
$qxhx^{-1}=t(x)hx^{-1}=t(x)x^{-1}=t(xx^{-1})=q$, so $H$ is a normal subgroup of $G$. Thus $G'$ is the quotient group of $G$ by $H$. The following commutative
diagram:
$$\xymatrix{P\times_kG\ar[r]^{\cong}\ar[d]&P\times_XP\ar[d]\\Q\times_kG'\ar[r]^{\rho}&Q\times_SQ}$$ tells us that $\rho$ is a finite
\'etale surjective $Q$-morphism. Let $r$ be the degree of the connected finite \'etale cover $\varpi:Q\to S$. Then one sees
easily that both $Q\times_kG'$ and $Q\times_SQ$ are finite \'etale
of degree $r$ over $Q$. Thus $\rho$ must be an isomorphsim for it is an isomorphism on all the geometric fibres of $Q$. Because the  Now $\varpi:Q\to S$
has a structure of a $G'$-torsor which satisfies all the conditions
in (3) of our main theorem. So we can use the same argument we have
used in "$(3)\Longrightarrow (1)$" to conclude our proof.
\end{proof}

\section{The Proper Case}
\begin{thm}\footnote{In the original article  \cite{EHV} the statement was that the exactness is equivalent to base change. We add the condition that $f_*V$ is essentially finite to complete the argument.} {\rm (H.Esnault, P.H.Hai , E.Viehweg)} Let $f:X\to S$ be a proper separable morphism with geometrically connected fibres between
two reduced connected proper schemes over a perfect field $k$, $x\in
X(k)$, $s\in S(k)$, $f(x)=s$. Assume further that $S$ is
irreducible. Then the homotopy sequence
$$\pi^N(X_s,x)\rightarrow\pi^N(X,x)\rightarrow \pi^N(S,s)\rightarrow
1$$ is exact if and only if for any $G$-saturated torsor $(P,G,p)\in
N(X,x)$ with structure map $\pi:P\to X$, $V:=\pi_*O_P$ satisfies
base change at $s$ and $f_*V$ is essentially finite.
\end{thm}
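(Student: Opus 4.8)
The plan is to deduce Theorem 3.1 from Theorem 2.1 by verifying that, in the proper setting, the apparently weaker condition "$V := \pi_*O_P$ satisfies base change at $s$ and $f_*V$ is essentially finite" is equivalent to condition (3) of Theorem 2.1. Since $S$ is now proper over the perfect field $k$, the notion of essentially finite vector bundle on $S$ is available, and Nori's Tannakian description applies to $\pi^N(S,s)$; this is precisely what lets us upgrade $f_*V$ from a mere coherent sheaf to something that gives a torsor over $S$. First I would show the forward implication: if the homotopy sequence is exact, Theorem 2.1 (1)$\Rightarrow$(2)$\Rightarrow$(3) already gives base change for $V$ at $s$ and produces a $G'$-saturated torsor $\pi': P' \to S$ with $(\pi'_*O_{P'})_s \xrightarrow{\sim} (f_*V)_s$. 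So it remains to identify $f_*V$ itself (not just its stalk at $s$) with $\pi'_*O_{P'}$, which is essentially finite because $P'$ is a torsor under a finite group scheme over the proper $k$-scheme $S$. The argument in "$(2)\Rightarrow(3)$" of Theorem 2.1 in fact shows $f^*f_*W \to W$ is an isomorphism for $W = (P/H)$-pushforward and that $W$ satisfies base change everywhere on $S$; the same reasoning, applied with $P' = \Spec(f_*W)$, shows $f_*V \cong f_*W = \pi'_*O_{P'}$ up to the maximal-trivial-subbundle identification, hence $f_*V$ is (a subbundle of) an essentially finite bundle, so essentially finite.

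For the converse — the substantive direction — I would assume that for every $G$-saturated torsor $(P,G,p)$ the bundle $V = \pi_*O_P$ satisfies base change at $s$ and $f_*V$ is essentially finite, and aim to produce the data of condition (3) of Theorem 2.1, after which (3)$\Rightarrow$(1) finishes. The point is that $V$ carries a $G$-coaction $V \to V \otimes_k k[G]$; pushing forward along the proper map $f$ gives $f_*V \to f_*V \otimes_k k[G]$, so $\mathcal{Q} := \Spec_{O_S}(f_*V)$ inherits a $G$-action over $S$, and there is a canonical $G$-equivariant morphism $P \to \mathcal{Q} \times_S X$ over $X$. Because $f_*V$ is essentially finite, $\mathcal{Q} \to S$ is an affine morphism with an essentially finite coordinate algebra; I would argue that the connected component $P'$ of $\mathcal{Q}$ through the image of $p$, with the stabilizer subgroup-scheme $G' \subseteq G$ of that point acting, is an FPQC $G'$-torsor over $S$ — here one needs that a geometrically connected "quotient-like" object built from an essentially finite algebra is genuinely a torsor, which is where the essential finiteness of $f_*V$ does its real work (it ensures $P' \to S$ is faithfully flat of the right rank and that the action map $P' \times_k G' \to P' \times_S P'$ is an isomorphism, checkable on fibres as in the Corollary's proof). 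Then base change at $s$ for $V$ forces the induced map $(\pi'_*O_{P'})_s \to (f_*V)_s$ to be an isomorphism, by comparing both sides to the maximal trivial subbundle of $V|_{X_s}$ via Lemma 2.2 and Lemma 2.5 exactly as in Theorem 2.1.

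The main obstacle I anticipate is precisely this last construction: turning $\Spec_{O_S}(f_*V)$ (or its relevant connected component) into an honest $G'$-saturated torsor over $S$. One must control the group scheme $G'$ — it should be the scheme-theoretic stabilizer of a rational point of a fibre, hence a finite (not necessarily étale) subgroup scheme of $G$, and one must check it is normal so that $G/G'$ makes sense — and one must verify flatness and the torsor condition without the luxury of étaleness that made the analogous step in Corollary 2.7 easy. Essential finiteness of $f_*V$ is exactly the hypothesis that rescues this: it guarantees the Hilbert polynomial (rank) of the fibres of $\mathcal{Q} \to S$ is constant, so that the cardinality/length count forcing $P' \times_k G' \xrightarrow{\sim} P' \times_S P'$ goes through on every geometric fibre, and a morphism of finite flat $S$-schemes of the same degree which is an isomorphism on all geometric fibres is an isomorphism. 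Once (3) of Theorem 2.1 is in hand, the implication (3)$\Rightarrow$(1) proved there applies verbatim and the theorem follows.
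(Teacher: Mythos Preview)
Your converse direction diverges substantially from the paper and runs into the obstacle you yourself flag. The paper does not attempt to carve a torsor out of $\Spec_{O_S}(f_*V)$ geometrically. Instead it uses essential finiteness of $f_*V$ Tannakianly: the subcategory of $\Ess(S)$ generated by $f_*V$ has a Tannaka group $G'$, and the inclusion $f^*f_*V \hookrightarrow V$ (which is a morphism in $\Ess(X)$ precisely because $f_*V\in\Ess(S)$) yields a surjection $\lambda:G\twoheadrightarrow G'$. Nori's correspondence (\cite[Chapter I, Proposition 3.11]{Nori}) then produces a $G'$-saturated torsor $(P',G',p')$ on $S$ together with a map $(P,G,p)\to f^*(P',G',p')$ extending $\lambda$, and a short comparison in $\Rep_k(G)$ identifies $\pi'_*O_{P'}$ with $f_*V$, so Theorem 2.1(3) applies. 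This bypasses entirely the connected-component and stabilizer analysis, which in the non-\'etale case is genuinely delicate: your $G'$ oscillates between a subgroup and a quotient in the write-up, and the rigidity step used in the proof of Corollary 2.4 (an automorphism of a connected \'etale cover fixing a point is the identity) has no analogue when $G$ is infinitesimal.

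Your forward direction has a real gap. From Theorem 2.1 you correctly get base change at $s$ and a torsor $P'$ with $(\pi'_*O_{P'})_s\cong (f_*V)_s$; the problem is promoting this to a global isomorphism $\pi'_*O_{P'}\cong f_*V$. The semi-continuity argument from $(2)\Rightarrow(3)$ does not transfer: there one bounded $\dim_{\kappa(t)} H^0(X_t,W|_{X_t})$ above by $\rank W$ because $W|_{X_t}$ was trivial, but $V|_{X_t}$ is not trivial and there is no a priori reason the maximal trivial sub of $V|_{X_t}$ has constant rank in $t$. Your fallback, ``$f_*V$ is a subbundle of an essentially finite bundle, so essentially finite'', is both in the wrong direction (you have $f_*W\hookrightarrow f_*V$, not the reverse) and false as a general principle. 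The paper closes this gap by a torsion-freeness trick: writing $V':=\pi'_*O_{P'}$, one has $f_*V/V'\subseteq f_*(V/f^*V')$, and since $V/f^*V'$ is $S$-flat, \cite[Th\'eor\`eme 7.7.6]{EGA} gives $f_*(V/f^*V')\cong\mathscr{H}om_{O_S}(\mathscr{Q},O_S)$ for some coherent $\mathscr{Q}$, which is torsion-free on the integral scheme $S$. As $f_*V/V'$ vanishes on a neighbourhood of $s$, it must vanish everywhere, whence $f_*V\cong V'$ is essentially finite.
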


\begin{proof} $"\Longleftarrow"$ Since $f_*V$ satisfies base change,
the canonical map $f^*f_*V\to V$ is of the form
$$\Gamma(X_s,V|_{X_s})\otimes_k O_{X_s}\rightarrow V|_{X_s}$$ after
restricting to the fibre $X_s$. Because $f^*f_*V\to V$ is a map of
essentially finite vector bundles, the kernel of it is also a vector
bundle. But the kernel is trivial on $X_s$, so the kernel itself is
trivial. Thus $f^*f_*V\subseteq V$ is a subobject in the category of
essentially finite vector bundles on $X$ and it becomes the maximal
trivial suboject after restricting to $X_s$. Now let $G'$ be the
Tannakian group of the sub Tannakian category of $\Ess(S)$ generated
by $f_*V$. The imbedding $f^*f_*V\to V$ gives us a surjection
$\lambda:G\to G'$. Let  $H$ be the kernel of $\lambda$. Then
$f^*f_*V\to V$ corresponds via Tannakian duality to an inclusion
$M\subseteq k[G]$ in $\Rep_k(G)$. Note that since $M$ comes from an
object in $\Rep_k(G')$ via $\lambda: G\to G'$, so $M\subseteq k[G]$
factors through the inclusion $k[G]^H\subseteq k[G]$. On the other
hand, since we have a surjection $\pi^N(S,s)\to G'$, by
\cite{Nori}[Chapter I, Proposition 3.11] we have a $G'$-saturated
torsor $(P',G',p')\in N(S,s)$ with a map
$$\theta: (P,G,p)\to f^*(P',G',p')$$ in $N(X,x)$ extending
$\lambda$. Let $V':=\pi_*'O_{P'}$, $\pi': P'\to S$. Then since $P\to
f^*P'\cong P/H$ is faithfully flat, $f^*V'\subseteq V$ is a subbundle,
and this subbundle corresponds via Tannakian duality to the
inclusion $k[G]^H\subseteq k[G]$. But clearly $f^*V'\subseteq V$
factors $f^*f_*V\to V$, so $k[G]^H\subseteq k[G]$ factors
$M\subseteq k[G]$, which means $k[G]^H=M$. So we have $V'\cong
f_*V$. Now the triple $(P',G',p')$ satisfies all our conditions in
Theorem 2.1 (3), so we get the exact sequence.

$"\Longrightarrow"$ By Theorem 2.1 we have a $G'$-saturated torsor
$(P', G',p')\in N(S,s)$ and a morphism $$\theta: (P,G,p)\to f^*(P',
G',p')\in N(X,x)$$ such that the induced map ${V}_s'\to (f_*V)_s$ is
an isomorphism, where $V':=\pi_*'O_{P'}$ and $\pi': P'\to S$ is the
structure map. Because $V$ satisfies base change at $s$, there is a
neighborhood $s\in U$ such that $f_*V$ is a vector bundle on $U$ and
the adjunction map $f^*f_*V\to V$ is an imbedding of
subbundles (locally split) on
$f^{-1}(U)$. Since $P\to f^*P'$ is finite faithfully flat, the induced map $f^*V'\to V$ is an imbedding of
subbundles over $X$ (i.e. $V/f^*V'$ is a vector bundle on $X$). Because $f^*V'\to
V$ factors through the adjunction map, we get a map $f^*V'\to f^*f_*V$ which is an imbedding of
subbundles on $f^{-1}(U)$. Since ${V}_s'\cong (f_*V)_s$,
$f^*V'\to f^*f_*V$ is an isomorphism on $f^{-1}(U)$. Hence the
injective map $V'\to f_*V$ is also an isomorphism on $U$. Now by
\cite{EGA}[Th\'eor\`eme 7.7.6] there is a coherent sheaf
$\mathscr{Q}$ on $S$ such that
$$f_*(V/f^*V')\cong\mathscr{H}om_{O_S}(\mathscr{Q},O_S).$$ Since locally
$\mathscr{H}om_{O_S}(\mathscr{Q},O_S)$ is contained in a vector
bundle and by applying the left exact functor $f_*$ to the sequence $0\to f^*V'\to V\to V/f^*V'\to 0$ we have
$$f_*V/V'\subseteq f_*(V/f^*V')=\mathscr{H}om_{O_S}(\mathscr{Q},O_S),$$
so if there is $t\in S\setminus U$ such that $(f_*V/V')_t\neq 0$,
then we can choose an open affine $t\in\Spec(A)\subseteq S$ such
that $$(f_*V/V')|_{\Spec(A)}\subseteq \bigoplus_{i=0}^{n}A_i,$$
where $A_i$ is a rank 1 free $A$-module for all $0\leq i\leq n$.
Notice that since $S$ is integral and $\Spec(A)$ is non-empty, so $A$ is
an integral ring. This implies $f_*V/V'$ is non-zero at the generic
point which contradicts to the fact that $f_*V/V'$ has support in
$S\setminus U$. So $V'\to f_*V$ is an isomorphism on $S$. But $V'$
is certainly essentially finite. This completes the proof.
\end{proof}

\subsection{Application to the K\"unneth formula}

\begin{defn}Let $X$ be a reduced connected locally noetherian scheme over a field $k$
with a rational point $x\in X(k)$. Let $N^F(X,x)$ be the full
subcategory of $N(X,x)$ whose objects consist of pointed torsors
with finite local groups. This category is also filtered so we can
write $\pi^F(X,x):=\varprojlim_{N^F(X,x)}G$. If $X$ is also proper
and $k$ is perfect, then $\pi^F(X,x)$ is the Tannakian group of the
full subcategory of the category of essentially finite vector
bundles $\Ess(X)$ consisting of $F$-trivial bundles, i.e. vector
bunldes which are trivial after pull back along some relative
Frobenius $\phi_{(-t)}: X^{(-t)}\to X$ with $t\in\N$.\end{defn}

\begin{cor}Let $X$ and $Y$ be two reduced connected proper schemes over a perfect field $k$. Let $x\in X(k)$,
$y\in Y(k)$. Then the canonical map
$$\pi^F(X\times_kY,(x,y))\to\pi^F(X,x)\times_k\pi^F(Y,y)$$ is an
isomorphism of $k$-group schemes.
\end{cor}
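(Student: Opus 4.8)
The plan is to deduce the isomorphism from the criterion of Theorem~2.1 applied to the first projection $p_1\colon X\times_kY\to X$, together with a formal argument using the two obvious sections. Write $p_2\colon X\times_kY\to Y$ for the second projection, $j_1\colon Y\to X\times_kY$, $v\mapsto(x,v)$, for the inclusion of the fibre $(X\times_kY)_x\cong Y$ of $p_1$ over $x$, and $j_2\colon X\to X\times_kY$, $u\mapsto(u,y)$, for the section of $p_1$ through $x$, so that $p_1j_2=\id_X$, $p_2j_1=\id_Y$ while $p_2j_2$ and $p_1j_1$ are constant. Since $k$ is perfect $X$ and $Y$ are geometrically reduced, and since they carry $k$-points they are geometrically connected; hence $p_1$ is separable proper with geometrically connected fibres between reduced connected proper $k$-schemes, so Theorem~2.1 is available for it. Now the canonical map $\kappa:=(p_{1*},p_{2*})\colon\pi^F(X\times_kY,(x,y))\to\pi^F(X,x)\times_k\pi^F(Y,y)$ is \emph{always} faithfully flat, because for every $k$-algebra and every $(g_1,g_2)$ in the target the element $j_{2*}(g_1)\,j_{1*}(g_2)$ maps to $(g_1,g_2)$, so $\kappa$ is an epimorphism of fppf sheaves. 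For injectivity, $\ker\kappa=\ker p_{1*}\cap\ker p_{2*}$; if the sequence
$$\pi^F(Y,y)\xrightarrow{\ j_{1*}\ }\pi^F(X\times_kY,(x,y))\xrightarrow{\ p_{1*}\ }\pi^F(X,x)\to 1$$
is exact then $\ker p_{1*}=\im j_{1*}$, and any $z=j_{1*}(g)$ lying in $\ker p_{2*}$ satisfies $g=p_{2*}j_{1*}(g)=e$, hence $z=e$. So the whole statement reduces to exactness of this one homotopy sequence.

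To prove that sequence exact: surjectivity of $p_{1*}$ is clear since $j_2$ is a section, and for $\ker p_{1*}\subseteq\im j_{1*}$ I would argue torsor by torsor, using that the implication ``$(3)\Rightarrow(1)$'' of Theorem~2.1 only uses condition~(3) for the torsor at hand, and (unlike Theorem~3.1) does not use irreducibility of the base. Thus it suffices to check, for every $G$-saturated $F$-torsor $(P,G,p)$ on $X\times_kY$ with $\pi\colon P\to X\times_kY$ and $V:=\pi_*\mathcal{O}_P$, that condition~(3) of Theorem~2.1 holds for the morphism $p_1$: namely (a)~$V$ satisfies base change at $x$ along $p_1$, and (b)~there is a $G'$-saturated $F$-torsor $(P',G',p')$ on $X$ and a morphism $(P,G)\xrightarrow{\theta}(P',G')$ inducing an isomorphism $(\pi'_*\mathcal{O}_{P'})_x\to(p_{1*}V)_x$. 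Granting these, the ``$(3)\Rightarrow(1)$'' computation identifies the image of $\pi^F(Y,y)$ in $G$ with the image of $\ker p_{1*}$ in $G$, and letting $(P,G,p)$ range over all $F$-torsors finishes.

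Granting (a), condition (b) is produced exactly as in the proof of Theorem~3.1: base change identifies the adjunction $p_1^*p_{1*}V\to V$, restricted to the fibre $Y$ over $x$, with the inclusion of the maximal trivial subbundle (Lemma~2.2 applied to $V|_Y$), so $p_1^*p_{1*}V$ is a subobject of $V$ in $\Ess(X\times_kY)$; since $V$ is essentially finite and $F$-trivial and the class of $F$-trivial bundles is stable under subobjects in $\Ess$ (a subobject corresponds to a quotient of the Tannaka group, and a quotient of a finite local group is finite local), $p_1^*p_{1*}V$ is $F$-trivial, and restricting along $j_2$ shows $p_{1*}V=j_2^*(p_1^*p_{1*}V)$ is $F$-trivial on $X$. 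Taking $G'$ to be the Tannaka group of the subcategory of $\Ess(X)$ generated by $p_{1*}V$ (so that $\pi^F(X,x)\twoheadrightarrow G'$, i.e.\ $G'$ is attained by a $G'$-saturated $F$-torsor) and invoking Nori's lifting lemma \cite{Nori}[Chapter~I, Proposition~3.11] produces $(P',G',p')$ and $\theta$ with $\pi'_*\mathcal{O}_{P'}\cong p_{1*}V$, as required.

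The hard part is (a): for $V=\pi_*\mathcal{O}_P$ with $(P,G,p)$ an $F$-torsor I must show $p_{1*}V$ is a vector bundle on $X$ whose formation commutes with base change at $x$ (in fact everywhere). Since $P$ is an $F$-torsor, $V$ is $F$-trivial, so there is $t$ with $(\phi_{(-t)})^*V\cong\mathcal{O}^{\oplus r}$ on $(X\times_kY)^{(-t)}=X^{(-t)}\times_kY^{(-t)}$, where $\phi_{(-t)}=\phi^X_{(-t)}\times\phi^Y_{(-t)}$. The idea is to separate the two Frobenius directions: the square relating $p_1$ to the projection $X^{(-t)}\times_kY\to X^{(-t)}$ along $\phi^X_{(-t)}$ is Cartesian, so flat base change identifies $(\phi^X_{(-t)})^*(p_{1*}V)$ with the pushforward to $X^{(-t)}$ of $(\phi^X_{(-t)}\times\id_Y)^*V$; that bundle becomes trivial after the remaining, purely $Y$-directional map $\id\times\phi^Y_{(-t)}$, and one controls its pushforward using that $Y$ and all its Frobenius twists are geometrically connected, reduced and proper over the perfect field $k$, whence $\Gamma(Y^{(-s)},\mathcal{O})=k$; local freeness of $p_{1*}V$ and base change then follow by faithfully flat descent along $\phi^X_{(-t)}$. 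This step is the crux, and the point where properness of $Y$ itself, not merely of the total space, enters: $F$-trivial bundles are not unipotent --- a $p$-torsion line bundle is $F$-trivial --- so constancy of $\dim_k\Gamma$ along the fibres of $p_1$ is not a formality, and since $Y$ need not be smooth its relative Frobenius need not be flat, so the reduction to trivial bundles has to be organised carefully. This last step is exactly a new proof of \cite{MS}[Proposition~2.1].
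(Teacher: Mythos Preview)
Your overall strategy coincides with the paper's: reduce to exactness of the homotopy sequence for $p_1$ and verify the $\pi^F$-analogue of the criterion (Theorem~2.1(3), equivalently Theorem~3.1). The reduction via the two sections $j_1,j_2$ is fine. The gaps are in the execution of (a) and in your derivation of $F$-triviality of $p_{1*}V$.

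For (a), you invoke ``flat base change'' to identify $(\phi^X_{(-t)})^*(p_{1*}V)$ with the pushforward to $X^{(-t)}$ of $W:=(\phi^X_{(-t)}\times\id_Y)^*V$. But $\phi^X_{(-t)}$ need not be flat: $X$ is only reduced, not smooth --- you flag this issue for the $Y$-direction but it bites equally in the $X$-direction. That identification is only available \emph{after} one knows $V$ satisfies cohomology-and-base-change along $p_1$, which is precisely what (a) asserts. The paper fixes this order of operations by first working on $X^{(-t)}\times_kY$: over $\bar k$ the bundle $W$ has trivial restriction to every closed fibre of $p_2$, so by \cite{Mum}[Chapter~2, \S5, Corollary~2] there is a bundle $E$ on $Y$ with $W\cong p_2^*E$. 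Hence $V$ has the \emph{same} restriction $E$ to every closed fibre of $f=p_1$, so base change holds at every point of $X$; only then does cohomology-and-base-change give $(\phi^X_{(-t)})^*(f_*V)\cong (p_1)_*W$.

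Your argument that $p_{1*}V$ is $F$-trivial is circular: to say ``$p_1^*p_{1*}V$ is a subobject of $V$ in $\Ess(X\times_kY)$'' you already need $p_1^*p_{1*}V\in\Ess$, hence $p_{1*}V$ essentially finite --- exactly the conclusion sought. (And a mere vector-bundle subbundle of a trivial bundle need not be trivial, so pulling back by Frobenius does not help without the Tannakian input.) The paper sidesteps this entirely: from $W\cong p_2^*E$ and genuine flat base change along the structure map $X^{(-t)}\to\Spec k$ one gets $(p_1)_*W\cong \Gamma(Y,E)\otimes_k\mathcal O_{X^{(-t)}}$, a trivial bundle; combined with the previous paragraph this yields $(\phi^X_{(-t)})^*(f_*V)$ trivial, i.e.\ $f_*V$ is $F$-trivial directly, without ever needing to know in advance that $p_1^*p_{1*}V$ sits in $\Ess$.
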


\begin{proof} As usual we may assume $k=\bar{k}$. We will use the obvious analogues of Theorem 3.1
to prove this theorem. Note that after replacing $\pi^N(X,x)$ by
$\pi^F(X,x)$, "torsor" by "local torsor" (torsors whose groups are
local), essentially finite vector bundle by $F$-trivial vector
bundle, Theorem 2.1 and Theorem 3.1 are still true.

To prove this corollary we only need to show that the sequence
$$1\to\pi^F(Y,y)\to \pi^F(X\times_kY,(x,y))\to \pi^F(X,x)\to 1$$ is
exact. So we have to check that for any $G$-saturated local torsor
$(P,G,p)\in N^F(X,x)$, $V:=\pi_*O_P$ ($\pi: P\to X$ is the structure
map) satisfies base change at $x$ and $f_*V$ is an $F$-trivial
vector bundle.

Now suppose that $V$ is trivialized by $$X^{(-t)}\times_k
Y^{(-t)}=(X\times_kY)^{(-t)}\to X\times_kY.$$ Consider the following
commutative diagram
$$\xymatrix{Y\ar[r]^-{x}\ar@{=}[d]&X^{(-t)}\times_kY\ar[d]^-{\phi_{(-t)}\times id}\ar[r]^-{p_1}&X^{(-t)}\ar[d]^-{\phi_{(-t)}}\\Y\ar[r]^-x&X\times_kY\ar[r]^-{f}&X}.$$ Let
$W$ be the pull back of $V$ via $X^{(-t)}\times_kY\to X\times_kY$.
 Let
$W$ be the pull back of $V$ via $X^{(-t)}\times_kY\to X\times_kY$.
Since $W$ has trivial fibres along the projection $p_2:
X^{(-t)}\times_kY\to Y$ and $X^{(-t)}$ is proper separable and
geometrically connected scheme, if we set 
$E:={p_2}_*W$ then by \cite{Mum}[Chapter 2, \S 5, Corollary 2] we see that $E$ is a vector bundle and that the canonical map $p_2^*E\to W$ is an isomorphism. Hence for all closed point of $X$
(or equivalently $X^{(-t)}$), $V|_Y\cong W|_Y\cong E$ i.e. $V$ has
constant fibres along $f: X\times_kY\to X$. Consequently base change
is satisfied for $V$ along $f$ (at any point of $X$). On the other
hand we have the following trivial cartesian diagram
$$\xymatrix{X^{(-t)}\times_kY\ar[r]^-{p_2}\ar[d]^-{p_1}&Y\ar[d]^-b\\X^{(-t)}\ar[r]^-a&\Spec(k)},$$ where $a$ and $b$ are structure maps.
Because of base change we have $a^*b_*E\cong {p_1}_*{p_2}^*E$. This
implies ${p_1}_*W={p_1}_*{p_2}^*E$ is a trivial vector bundle. But
since base change is satisfied for $V$ along $f$, so we have a
canonical isomorphism
$${p_1}_*W={p_1}_*(\phi_{(-t)}\times id)^*V\cong\phi_{(-t)}^*f_*V.$$
Thus $\phi_{(-t)}^*f_*V$ is a trivial vector bundle. By definition
$f_*V$ is $F$-trivial.
\end{proof}

\begin{rmks}
(1) Here we didn't assume $X$ or $Y$ is irreducible, this is because
we have only used the sufficiency part of Theorem 3.1 in which only
the citation of Theorem 2.1 used the irreduciblity. But we only used
$(3)\Rightarrow (1)$ part of Theorem 2.1 where the
irreduciblity plays no role.\\
(2) This corollary gives another way to see \cite{MS}[Proposition
2.1] which is the key point in the proof of the K\"unneth formula
for Nori's fundamental group. But unfortunately, for the full proof
of of K\"unneth formula we have to use the same trick employed in
\cite{MS} to reduce the problem for $\pi^N$ to the problem for
$\pi^F$. At the moment, I can not find any easy way to reduce the
problem to $\pi^F$ using our language here.
\end{rmks}

\section{A counterexample}

\begin{lem}Let $X$ and $Y$ be two reduced connected schemes locally of finite type over an algebraically closed field $k$. Let $x\in X(k)$,
$y\in Y(k)$. If the canonical map
$$\pi^N(X\times_kY,(x,y))\to\pi^N(X,x)\times_k\pi^N(Y,y)$$ is an
isomorphism of $k$-group schemes, then for any $(P,G,p)\in N(X\times_k Y,(x,y))$ and any other point $x'\in X(k)$, the restriction of
$(P,G,p)$ to $Y$ along $x$ and $x'$ are isomorphic.\end{lem}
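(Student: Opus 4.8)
The plan is to exploit the Künneth isomorphism to compare the two restrictions $P|_{x\times Y}$ and $P|_{x'\times Y}$ as $G$-torsors over $Y$. Since $k=\bar k$ and everything is locally of finite type, the category $N(X\times_k Y,(x,y))$ is equivalent (after the Tannakian formalism, or directly via the projective limit presentation in Nori's construction) to finite-dimensional continuous representations of $\pi^N(X\times_k Y,(x,y))$, and a $G$-saturated quotient corresponds to a surjection $\pi^N(X\times_k Y,(x,y))\twoheadrightarrow G$. So the first step is to translate the datum $(P,G,p)$ into a surjective homomorphism $\rho:\pi^N(X\times_k Y,(x,y))\to G$, and to note that restricting the torsor along a section $x'\times Y\hookrightarrow X\times_k Y$ through a rational point corresponds, on fundamental groups, to precomposing $\rho$ with the map $\pi^N(Y,y)\to\pi^N(X\times_k Y,(x',y))$ induced by $y\mapsto (x',y)$, and then identifying $\pi^N(X\times_k Y,(x',y))$ with $\pi^N(X\times_k Y,(x,y))$ (all rational points on a connected $k$-scheme, $k$ algebraically closed, give canonically isomorphic Nori fundamental groups; concretely one uses that $X$ is connected so $x$ and $x'$ can be joined and the torsor-with-section picture is rigid).

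\textbf{Key steps.} (i) Using the Künneth isomorphism $\pi^N(X\times_k Y,(x,y))\xrightarrow{\sim}\pi^N(X,x)\times_k\pi^N(Y,y)$, write the two inclusions $i_x,i_{x'}:\pi^N(Y,y)\to\pi^N(X\times_k Y,\cdot)$ induced by $y\mapsto(x,y)$ and $y\mapsto(x',y)$. Under the Künneth identification, $i_x$ is simply $(\varepsilon,\mathrm{id})$ where $\varepsilon:\pi^N(Y,y)\to\pi^N(X,x)$ is the trivial (constant) homomorphism factoring through the structure map to $\Spec k$, since $y\mapsto(x,y)$ composed with the first projection $X\times_k Y\to X$ is the constant map $Y\to\{x\}$. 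Likewise $i_{x'}$ is $(\varepsilon',\mathrm{id})$ with $\varepsilon'$ factoring through $\Spec k$ as well. But $\varepsilon=\varepsilon'$: both are the composite $\pi^N(Y,y)\to\pi^N(\Spec k)=1\to\pi^N(X,x)$ — here one uses functoriality of $\pi^N$ applied to $Y\to\Spec k\xrightarrow{x} X$ versus $Y\to\Spec k\xrightarrow{x'} X$, which agree because $\pi^N(\Spec k)$ is trivial. (ii) Hence $i_x=i_{x'}$ as homomorphisms $\pi^N(Y,y)\to\pi^N(X\times_k Y,(x,y))$ once both source and target Nori groups are taken with compatible base points, and therefore $\rho\circ i_x=\rho\circ i_{x'}:\pi^N(Y,y)\to G$. (iii) Translate back: two $G$-saturated-or-not pointed $G$-torsors over $(Y,y)$ are isomorphic in $N(Y,y)$ iff they induce the same homomorphism $\pi^N(Y,y)\to G$ (this is exactly the equivalence of categories defining $\pi^N$). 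Since $(P,G,p)|_{x\times Y}$ and $(P,G,p)|_{x'\times Y}$ induce $\rho\circ i_x$ and $\rho\circ i_{x'}$ respectively, they are isomorphic as pointed torsors over $Y$ — in particular isomorphic.

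\textbf{Main obstacle.} The delicate point is step (i): making precise the identification of base points and checking that $i_x=i_{x'}$ rather than merely being conjugate. The issue is that $\pi^N$ depends on the base point, and changing the point in the $X$-direction changes which copy of $\Spec k$ the first coordinate factors through; one must argue that because $\pi^N(\Spec k)=1$, both $x$ and $x'$ induce \emph{the same} (namely trivial) contribution to the first Künneth factor, not just conjugate ones. This is where algebraic closedness of $k$ and the fact that $x,x'$ are rational points are used essentially: they give sections of $X\to\Spec k$, and functoriality $\pi^N(Y)\to\pi^N(\Spec k)\to\pi^N(X)$ collapses the distinction. A secondary routine point is justifying that "restriction of a pointed torsor along a rational point of the second factor" is the torsor-level incarnation of "precompose the classifying homomorphism with $i_x$", which follows from the functoriality of the equivalence $N(-,-)\simeq$ (surjections out of $\pi^N$) applied to the closed immersion $x\times Y\hookrightarrow X\times_k Y$.
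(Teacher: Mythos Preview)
Your underlying idea---use K\"unneth to decompose the classifying homomorphism and observe that the $Y$-factor is the same for both restrictions---is the same as the paper's. But the execution has a real gap at exactly the point you flag as the ``main obstacle'', and your proposed resolution does not close it.

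The problem is the base-point bookkeeping for $i_{x'}$. The map $t\mapsto(x',t)$ sends $y$ to $(x',y)$, so $i_{x'}$ naturally lands in $\pi^N(X\times_kY,(x',y))$, not in $\pi^N(X\times_kY,(x,y))$. To compare $i_x$ and $i_{x'}$ you must choose an identification of these two groups, and such an identification is \emph{not} canonical: Nori fundamental groups at different rational points are isomorphic only up to a choice, with the ambiguity being inner automorphisms (just as for \'etale $\pi_1$). Your sentence ``all rational points\dots give canonically isomorphic Nori fundamental groups'' is therefore not correct as stated. Worse, to write $i_{x'}=(\varepsilon',\mathrm{id})$ you implicitly invoke K\"unneth at the base point $(x',y)$, which the hypothesis only gives you at $(x,y)$. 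Collapsing the first coordinate through $\pi^N(\Spec k)=1$ does not help here: that shows the composite $\pi^N(Y,y)\to\pi^N(X,x')$ is trivial, but you still have no canonical way to embed the answer back into $\pi^N(X\times_kY,(x,y))$.

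The paper sidesteps all of this by staying on the torsor side. From the K\"unneth isomorphism at $(x,y)$ one extracts a morphism
\[
(P_1\times_kP_2,\;G_1\times_kG_2,\;p_1\times p_2)\longrightarrow(P,G,p)
\]
in $N(X\times_kY,(x,y))$ with $(P_i,G_i,p_i)$ over $(X,x)$ and $(Y,y)$ respectively; equivalently $P\cong(P_1\times_kP_2)\times^{G_1\times_kG_2}G$. Restricting $P_1\times_kP_2$ along either $x\times Y$ or $x'\times Y$ gives $G_1\times_kP_2$ (the fibre of $P_1$ over any $k$-point is a trivial $G_1$-torsor), so the two restrictions of $P$ are both the contracted product $(G_1\times_kP_2)\times^{G_1\times_kG_2}G$. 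No base-point comparison and no K\"unneth at $(x',y)$ are needed. If you want to keep your homomorphism language, the clean fix is to pass through this torsor factorisation rather than trying to compare $i_x$ and $i_{x'}$ directly.
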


\begin{proof}By the assumption we have a $(P_1,G_1,p_1)\in N(X,x)$ and $(P_2,G_2,p_2)\in N(Y,y)$ and a morphism $$(P_1\times_k P_2, G_1\times_kG_2,p_1\times p_2)\to (P,G,p)$$in $N(X\times_kY,(x,y))$. But the restrictions of $(P_1\times_k P_2, G_1\times_kG_2,p_1\times p_2)$ to $Y$ along $x$ and $x'$ are all isomorphic to $(G_1\times_kX\times_kP_2,G_1\times_kG_2,e\times p_2)$. This implies the restrictions of $(P,G,p)$ are all isomorphic to $$((G_1\times_kX\times_kP_2)\times^{(G_1\times_kG_2)}G,G,p)$$ the contracted product. This finishes the proof.
\end{proof}

Now consider $k$ an algebraically closed field of characteristic 2,
$X=\A^1_k$, $Y$ is a supersingular elliptic curve over $k$. In the
following we will construct an $\alpha_2$-torsor $\pi': Q\to
X\times_kY$ in FPQC-topology, and we will show that the restrictions
of $Q$ to $Y$ along the two rational points $x=0$ and $x=1$ in $X$
can not be isomorphic. This shows by our lemma that the canonical
map
$$\pi^N(X\times_kY,(x,y))\to\pi^N(X,x)\times_k\pi^N(Y,y)$$ could not
be an isomorphism for any rational point $y\in Y$ and $x=0$ or
$x=1$.

\begin{construction}Suppose $\pi:P\to Y$ be a non-trivial $\alpha_2$-torsor in
FPQC-topology. Note that we can choose $\pi:P\to Y$ to be the
Frobenius endomorphism $F:Y\to Y$, it naturally carries a
translation by $Y[F]\cong \alpha_2$ which makes it into a
non-trivial $\alpha_2-$torsor.

Let $V:=\pi_*O_P$. Let $\mathcal {L}$ be the cokernel of the
structure map $O_Y\to V$, then $\mathcal{L}$ is an essentially
finite line bundle, so it has degree 0. Since
$H^1(Y,\mathcal{L}^{-1})=\Ext^1(O_Y,\mathcal{L}^{-1})\neq 0$, so by
Riemann-Roch $h^0(Y,\mathcal{L}^{-1})=h^1(Y,\mathcal{L}^{-1})\neq
0$, but this implies $\mathcal{L}^{-1}$ is $O_Y$, hence we have
$\mathcal{L}\cong O_Y$. This gives us an exact sequence
$$0\to O_Y\to V\to O_Y\to 0.$$

We know that $P\to Y$ already becomes a trivial
torsor after pulling back along the relative Frobenius
$\phi_{(-1)}:Y^{(-1)}\to Y$. Thus after choosing a section
$Y^{(-1)}\to P\times_YY^{(-1)}$ we get an $Y^{(-1)}$-scheme
isomorphism $\alpha_2\times_kY^{(-1)}\cong P\times_YY^{(-1)}$ which
gives us an isomorphism of $O_{Y^{(-1)}}$-algebras
$$\delta:\xymatrix{{\phi_{(-1)}}^*V\ar[rr]^-{\cong}&& O_{Y^{(-1)}}[T]/T^2}$$ making the
diagram $(*)$:
$$\xymatrix{0\ar[r]&O_{Y^{(-1)}}\ar[r]\ar@{=}[d]&{\phi_{(-1)}}^*V\ar[r]\ar[d]^{\delta}_{\cong}&O_{Y^{(-1)}}\ar@{=}[d]\ar[r]&0\\0\ar[r]&O_{Y^{(-1)}}\ar[r]&O_{Y^{(-1)}}[T]/T^2\ar[r]&O_{Y^{(-1)}}\ar[r]&0}$$commutative.
By Grothendieck's FPQC descent theory there is an essentially unique
isomorphism of $O_{Y^{(-1)}\times_YY^{(-1)}}$-algebras $\varepsilon$
corresponding to $V$
$$\xymatrix{p_1^*{\phi_{(-1)}}^*V\ar[d]^{p_1^*\delta}\ar[r]^{\cong}&p_2^*{\phi_{(-1)}}^*V\ar[d]^{p_2^*\delta}
\\O_{Y^{(-1)}\times_YY^{(-1)}}[T]/T^2\ar[r]^{\varepsilon}&O_{Y^{(-1)}\times_YY^{(-1)}}[T]/T^2},$$
where $p_1$ and $p_2$ are the two projections of
$Y^{(-1)}\times_YY^{(-1)}$. From the commutative diagram $(*)$, we
know that this $\varepsilon$ is expressible by a matrix
$$\left(\begin{array}{ccc} 1 & a \\
0& 1
\end{array}\right)$$ in
$GL_2(\Gamma(Y^{(-1)}\times_YY^{(-1)},O_{Y^{(-1)}\times_YY^{(-1)}}))$
regarding $\{1,T\}$ as a basis for
$O_{Y^{(-1)}\times_YY^{(-1)}}[T]/T^2$. Since $P$ is not a trivial
torsor $a\neq 0$. Moreover since  $T\mapsto a+T$ by $\varepsilon$ we
have $(T+a)^2=0$, thus $a^2=T^2+a^2=(T+a)^2=0$.

Let $x$ be the indeterminate in $X=\A^1_k=\Spec(k[x])$. Let $A:=X\times_kY^{(-1)}\times_YY^{(-1)}$,  $\mathscr{A}:=O_{X\times_kY^{(-1)}\times_YY^{(-1)}}$ and $T$ be an indeterminate. Then the
$2\times2$-matrix: $$\left(\begin{array}{ccc} 1 & ax \\
0& 1
\end{array}\right)$$ in $GL_2(A,O_{A})$ determines an isomorphism
$$\varepsilon':\xymatrix{\mathscr{A}[T]/T^2\ar[rr]^{\cong}&&\mathscr{A}[T]/T^2}$$$$\ \ \ \ \ \ \ \ \ \xymatrix{T\ar@{|->}[rr]&&ax+T}.$$
  One checks readily that the pair
$$(\mathscr{A}[T]/T^2,\varepsilon')$$ gives us a descent data of affine schemes, i.e. the cocycle condition is
satisfied and the isomorphism $\varepsilon'$ is an automorphism of
the $\mathscr{A}$-algebra $\mathscr{A}[T]/T^2$. So it gives us an
affine scheme $\pi':Q\to X\times_kY$, which is obviously finite and
faithfully flat.

Next we will check that $Q$ is an $\alpha_2$-torsor in FPQC
topology. Let $S:=T$. Now we consider the commutative diagram of
$\mathscr{A}$-algebras
$$\xymatrix{\mathscr{A}[T]/T^2\otimes_{\mathscr{A}}\mathscr{A}[S]/S^2\ar[rr]^{\varepsilon'\otimes \varepsilon'}\ar[d]^{\lambda}&&\mathscr{A}[T]/T^2\otimes_{\mathscr{A}}\mathscr{A}[S]/S^2\ar[d]^{\lambda}\\\mathscr{A}[T]/T^2\otimes_{\mathscr{A}}\mathscr{A}[S]/S^2
\ar[rr]^{\varepsilon'\otimes id}&&\mathscr{A}[T]/T^2\otimes_{\mathscr{A}}\mathscr{A}[S]/S^2},$$
where $\lambda$ denotes the sheaf version of the canonical map
$$\alpha_{2,A}\times_{A}\alpha_{2,A}\xrightarrow{\cong}\alpha_{2,A}\times_{A}\alpha_{2,A}$$ sending $(x,y)\mapsto (x,xy)$.
This commutative diagram means that $\lambda$ defines an isomorphism between the descent data of $Q\times_{k}\alpha_{2,k}$ and $Q\times_{X\times_kY}Q$.
 Let $$\rho:Q\times_{k}\alpha_{2,k}\to Q\times_{X\times_kY}Q$$ be the corresponding isomorphism.
 It is clear that the composition of $\rho$ with the second projection of $Q\times_{X\times_kY}Q$
 defines an action$$\rho:Q\times_{k}\alpha_{2,k}\longrightarrow Q$$$$(q,g)\mapsto q\cdot g$$ of $\alpha_2$ on $Q$,
 so $\rho$ is the canonical map $(q,g)\mapsto (q,q\cdot g)$. This shows that $\pi':Q\to X\times_kY$ is an $\alpha_2$-torsor in FPQC-topology.

It is clear that the fiber of $Q$ on $Y$ along $x=0$ is the trivial
torsor and the fiber along $x=1$ is just $\pi:P\to X$ which is
non-trivial. This violates the necessary condition in our Lemma 4.1.
\end{construction}

\begin{rmk} Let $Y_i\ (i=0,1)$ be the fibre of $x=i$ along the map
$X\times_kY\to X$ and let $W:={\pi'}_*Q$. In the above construction
we see that $W|_{Y_0}=O_{Y_0}\oplus O_{Y_0}$ and $W|_{Y_1}=V$. If
the chosen non-trivial torsor $\pi: P\to Y$ is the relative
Frobenius endomorphism $F: Y\to Y$, then we have
$\dim_k\Gamma(Y,W|_{Y_1})=1$. But  $\dim_k \Gamma(Y,W|_{Y_0})=2$.
This tells us that $W$ does not satisfy base change at $x=0$. So it
provides an example for which the base change condition in Theorem
2.1 is not satisfied. But note that the normality condition is still
OK, since in this example the group is commutative.
\end{rmk}

\begin{rmk} The above construction actually shows that for any smooth proper connected scheme
$Y$ over an algebraically closed field $k$ of characteristic 2 which
admits a non-trivial $\alpha_2$-torsor, and for any rational points
$(x,y)\in \A_k^1\times_kY$ the K\"unneth formula does not hold for
$\A_k^1\times_kY$. In fact the only place where we used the
assumption that $E$ is an elliptic curve is in the argument to show
that $\mathcal{L}$ is $O_Y$, but everything still works without
knowing that $\mathcal{L}\cong O_Y$, it's just that the computation
is a little more complicated. Furthermore one can show easily that
in this situation if K\"unneth formula holds for one rational point
$(x,y)\in \A_k^1\times_kY(k)$ then it holds for any other rational
point.
\end{rmk}

\begin{rmk} H\'el\`ene Esnault and Andre Chatzistamatiou pointed to us the following non-constructive improvement of the above example.  Thanks to their suggestion our
counterexample may work for any algebraically closed field $k$ of
characteristic $p>0$. Now we consider the exact sequence of abelian
sheaves in the FPPF-topology
 $$0\to \alpha_p\to \G_a\xrightarrow{F}\G_a\to 0, $$ we then get a long exact sequence of abelian groups: $$\cdots\to H_{fl}^0(Y,\G_a)
 \to H_{fl}^1(Y,\alpha_p)\to H_{fl}^1(Y,\G_a)\xrightarrow{H^1(F)} H_{fl}^1(Y,\G_a)\to \cdots$$
 where $Y$ is a smooth proper connected $k$-variety so that the map $H^1(F)$ has none trivial kernel (e.g. a supersingular elliptic curve). Then we can choose some $a\neq 0$ in
 the that kernel. We can also put $\A_k^1\times_kY$ into the above long exact sequence instead of $Y$,
then since $$a\otimes x\in
H_{fl}^1(\A_k^1\times_kY,\G_a)=H_{fl}^1(Y,\G_a)\otimes_kk[x]$$ is
still in the
 kernel of $H^1(F)$, so
 we can choose an element $b\in H_{fl}^1(\A_k^1\times_kE,\alpha_p)$ such that $b\mapsto a\otimes x$. But then $b$ is an $\alpha_p$-torsor with non-constant fibres
 along the projection $\A_k^1\times_kY\to \A_k^1$ because $b$ has trivial image at $x=0$ and non-trivial image at $x=1$. This can not happen if K\"unneth formula was true.
\end{rmk}

\end{document}